\documentclass[11pt]{amsart}
\usepackage{epsfig}

\topmargin 0 pt \textheight 46\baselineskip \advance\textheight by
\topskip \setlength{\parindent}{0pt} \setlength{\parskip}{5pt plus
2pt minus 1pt} \setlength{\textwidth}{155mm}
\setlength{\oddsidemargin}{5.6mm} \setlength{\evensidemargin}{5.6mm}

\usepackage{epsfig}
\usepackage{amssymb}

\newtheorem{theorem}{Theorem}[section]
\newtheorem{prop}[theorem]{Proposition}
\newtheorem{lemma}[theorem]{Lemma}
\newtheorem{defn}[theorem]{Definition}
\newtheorem{corollary}[theorem]{Corollary}
\newcommand\beq{\begin{equation}}
\newcommand\eeq{\end{equation}}
\newcommand\bce{\begin{center}}
\newcommand\ece{\end{center}}
\newcommand\bea{\begin{eqnarray}}
\newcommand\eea{\end{eqnarray}}
\newcommand\ben{\begin{enumerate}}
\newcommand\een{\end{enumerate}}
\newcommand\bit{\begin{itemize}}
\newcommand\eit{\end{itemize}}
\newcommand\brr{\begin{array}}
\newcommand\err{\end{array}}
\newcommand\bt{\begin{tabular}}
\newcommand\et{\end{tabular}}

\newcommand\nn{\nonumber}

\newcommand\ms{\medskip}

\renewcommand\S{{\mathcal S}}
\newcommand\X{{\mathcal X}}
\newcommand\A{{\mathcal A}}
\newcommand\I{{\mathcal I}}
\newcommand\B{{\mathcal B}}
\newcommand\W{{\mathcal W}}
\renewcommand\P{{\mathcal P}}
\newcommand\bwx{f}
\newcommand\baw{\varphi}
\newcommand\inv{\operatorname{inv}}
\newcommand\fp{\operatorname{fp}}
\newcommand\exc{\operatorname{exc}}
\newcommand\cyc{\operatorname{cyc}}

\newcommand\open{\begin{picture}(13, 10)(-2,0)
 \put(0,0){\line(1,1){10}}
 \put(5,5){\line(1,0){5}}
 \put(5,5){\line(0,1){5}}
\end{picture}}
\newcommand\close{\begin{picture}(13, 10)(-2,0)
 \put(0,0){\line(1,1){10}}
 \put(0,5){\line(1,0){5}}
 \put(5,0){\line(0,1){5}}
\end{picture}}
\newcommand\ubounce{\begin{picture}(13, 10)(-2,0)
 \put(0,0){\line(1,1){10}}
 \put(0,5){\line(1,0){5}}
 \put(5,5){\line(0,1){5}}
\end{picture}}
\newcommand\lbounce{\begin{picture}(13, 10)(-2,0)
 \put(0,0){\line(1,1){10}}
 \put(5,5){\line(1,0){5}}
 \put(5,0){\line(0,1){5}}
\end{picture}}
\newcommand\fixed{\begin{picture}(13, 10)(-2,0)
 \put(0,0){\line(1,1){10}}
 \put(5,5){\circle*{4}}
\end{picture}}

\begin{document}
\title{The $X$-class and almost-increasing permutations}
\maketitle

\begin{center}
{Sergi Elizalde \\
Department of Mathematics \\
Dartmouth College, Hanover, NH 03755\\[4pt]
sergi.elizalde@dartmouth.edu}

\end{center}

\begin{abstract}

In this paper we give a bijection between the class of permutations that can be drawn on an $X$-shape
and a certain set of permutations that appears in Knuth~\cite{Knu} in connection to sorting algorithms.
A natural generalization of this set leads us to the definition of {\em almost-increasing permutations},
which is a one-parameter family of permutations that can be characterized in
terms of forbidden patterns. We find generating functions for almost-increasing permutations by using their
cycle structure to map them to colored Motzkin paths. We also give refined enumerations with respect to the
number of cycles, fixed points, excedances, and inversions.\ms

\end{abstract}

\section{Introduction and background}\label{sec:intro}

Permutations can sometimes describe the geometry of certain
configurations of points in the plane. {\em Picture classes}, introduced by Waton~\cite{Wat}, are sets of permutations
that can be drawn on given shapes in the plane. These classes are closed under pattern containment, and sometimes can be
characterized as the set of permutations that avoid a finite list of patterns. For example, it is shown in~\cite{Wat} that
a permutation can be drawn on a circle if and only if it avoids a fixed list
of 16 patterns. Another class that is studied in~\cite{Wat} is the $X$-class, which is the set of permutations
that can be drawn on a pair of crossing lines forming an $X$-shape (see Section~\ref{sec:xclass} for a precise definition).
Waton characterized them in terms of forbidden patterns, and showed that their counting sequence has a rational generating function.

Interestingly, the same generating function also appears in Knuth~\cite{Knu}, where it enumerates permutations $\pi$ such that for every $i$ there
is at most one $j\le i$ with $\pi(j)>i$. These permutations have a characterization in terms of forbidden patterns as well. The problem of finding a bijection
between Knuth's permutations and the $X$-class was proposed in the open problem session at the conference on Permutation Patterns held in 2006 in Reykjavik.
The present paper originates from this open problem. The sought bijection between these two sets of permutations is given in Section~\ref{sec:bija1}, using an
encoding of permutations as words.

Knuth's permutations can be regarded intuitively as permutations whose entries are not too far from those of the identity
permutation. The definition admits a natural generalization that depends on a parameter $k$, where Knuth's permutations are simply the case $k=1$. We call the
permutations in this family
{\it almost-increasing permutations}. Their precise definition is given in Section~\ref{sec:aip}, where we also characterize them in terms of pattern
avoidance.

The enumeration of almost-increasing permutations, for any value of the parameter $k$, is done in Section~\ref{sec:enumaip}.
Our method consists in using what we call the {\em cycle diagrams} of permutations to map them to certain colored Motzkin paths.
In Section~\ref{sec:stat} we obtain multivariate generating functions that keep track of the number of cycles, fixed points, excedances, and inversions.
When the parameter $k$ goes to infinity, our generating functions for almost-increasing permutations become continued fractions enumerating all permutations
by these statistics. We also restrict our results to involutions.


\ms

Here is some notation about pattern avoidance that will be used in the paper. We denote by $\S_n$ be the set of permutations of
$\{1,2,\ldots,n\}$. Using the standard notion of pattern avoidance, we say that $\pi\in\S_n$ avoids $\sigma\in\S_m$
if there is no subsequence of $\pi$ whose entries are in the same relative order as the entries of $\sigma$.
$\S_n(\sigma)$ denotes the
set of permutations in $\S_n$ that avoid the pattern $\sigma$. More
generally, given $\sigma_i\in\S_{m_i}$ for $1\le i\le k$, $\S_n(\sigma_1,\sigma_2,\dots,\sigma_k)$ is the set of
permutations that avoid all the patterns
$\sigma_1,\sigma_2,\dots,\sigma_k$.

We will use the terms {\em fixed point}, {\em excedance}, and {\em
deficiency} to denote an entry of a permutation $\pi$ such that
$\pi(i)=i$, $\pi(i)>i$, or $\pi(i)<i$, respectively. Recall also that $(i,j)$ is an {\em inversion} of $\pi$ if $i<j$ and $\pi(i)>\pi(j)$.
The number of fixed points, excedances, inversions, and cycles of $\pi$ will be denoted by $\fp(\pi)$, $\exc(\pi)$, $\inv(\pi)$, and $\cyc(\pi)$, respectively.

\section{Almost-increasing permutations}\label{sec:aip}

\begin{defn}
For any fixed $k\ge0$, the set of {\em $k$-almost-increasing permutations} is defined as
$$\A^{(k)}_n= \{\pi\in\S_n\, :\, |\{j : j\le i \mbox{ and } \pi(j)>i\}|\le k \mbox{ for every } i \}.$$
\end{defn}

Note that $\A^{(0)}_n$ contains only the identity permutation
$12\dots n$, whereas $\A^{(\lfloor n/2\rfloor)}_n=\S_n$. Intuitively, the smaller $k$ is, the closer the
permutations in $\A^{(k)}_n$ are to the increasing permutation. This is the reason for the name almost-increasing.
It is useful to observe that, for any permutation
$\pi$, \beq\label{eq:sym} |\{j : j\le i \mbox{ and }
\pi(j)>i\}|=|\{j : j> i \mbox{ and } \pi(j)\le i\}|. \eeq This identity, which is illustrated in Figure~\ref{fig:almostincr},
follows by noticing that both sides of the equality are equal to
$i-|\{j : j\le i \mbox{ and } \pi(j)\le~i\}|$.

\begin{figure}[hbt]
\epsfig{file=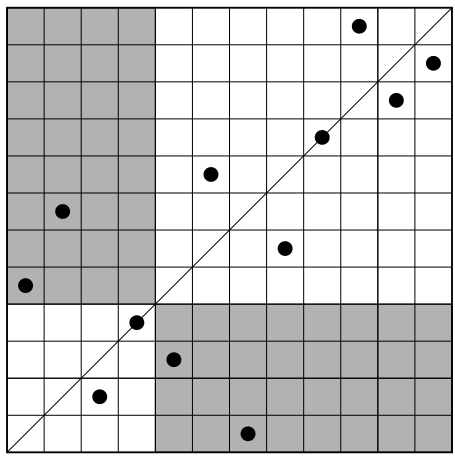,height=6cm}
\caption{\label{fig:almostincr} The permutation
$\pi=(5,7,2,4,3,8,1,6,9,12,10,11)\in\A^{(2)}_{12}$.}
\end{figure}

The set $\A^{(1)}_n=\{\pi\in\S_n : \mbox{for every } i \mbox{ there
is at most one } j\le i \mbox{ with } \pi(j)>i\}$ appears in Knuth's
book~\cite[Section 5.4.8, Exercise 8]{Knu} in connection to sorting
algorithms. It has an interesting characterization in terms of pattern avoidance as
$$\A^{(1)}_n=\S_n(3412,3421,4312,4321).$$
This result can be generalized to arbitrary $k$ as follows.

\begin{lemma}
Let $k\ge0$, and let
$\Sigma^{(k)}=\{\sigma\in\S_{2k+2}:\sigma_i>\sigma_j \mbox{ for
every } i\le k+1<j\}$. Then,
$$\A^{(k)}_n=\S_n(\Sigma^{(k)}).$$
\end{lemma}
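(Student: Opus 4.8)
The plan is to prove the two inclusions separately, each by contraposition, after reformulating the defining inequality. Observe that $\pi\notin\A^{(k)}_n$ means precisely that there is a threshold $i$ for which $|\{j : j\le i,\ \pi(j)>i\}|\ge k+1$; by the symmetry \eqref{eq:sym} this is equivalent to having at least $k+1$ positions $j>i$ with $\pi(j)\le i$. Thus a permutation fails to be $k$-almost-increasing exactly when some threshold $i$ is ``crossed'' by at least $k+1$ entries, and the goal is to match these crossings with occurrences of a pattern in $\Sigma^{(k)}$.

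For the inclusion $\S_n(\Sigma^{(k)})\subseteq\A^{(k)}_n$ I would argue the contrapositive: if $\pi\notin\A^{(k)}_n$, I produce an occurrence of some $\sigma\in\Sigma^{(k)}$. Fix a threshold $i$ with at least $k+1$ positions $j\le i$ satisfying $\pi(j)>i$, and choose exactly $k+1$ of them, say $j_1<\cdots<j_{k+1}$. By \eqref{eq:sym} there are also at least $k+1$ positions exceeding $i$ whose values are at most $i$; choose $k+1$ of those, say $j'_1<\cdots<j'_{k+1}$. Since $j_{k+1}\le i<j'_1$, the concatenation $j_1<\cdots<j_{k+1}<j'_1<\cdots<j'_{k+1}$ is increasing in position, while every value $\pi(j_\ell)>i\ge\pi(j'_m)$, so the first $k+1$ entries of this length-$(2k+2)$ subsequence all exceed the last $k+1$. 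After standardization this subsequence is exactly a pattern in $\Sigma^{(k)}$, so $\pi\notin\S_n(\Sigma^{(k)})$.

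For the reverse inclusion $\A^{(k)}_n\subseteq\S_n(\Sigma^{(k)})$, again by contraposition, suppose $\pi$ contains $\sigma\in\Sigma^{(k)}$ at positions $p_1<\cdots<p_{2k+2}$, so that $\pi(p_1),\dots,\pi(p_{k+1})$ all exceed $\pi(p_{k+2}),\dots,\pi(p_{2k+2})$. Writing $b=\min\{\pi(p_1),\dots,\pi(p_{k+1})\}$ and $a=\max\{\pi(p_{k+2}),\dots,\pi(p_{2k+2})\}$, we have $a<b$, and I must exhibit a threshold crossed $k+1$ times. The natural candidates are $i=p_{k+1}$, which forces $p_1,\dots,p_{k+1}\le i$ with values $>i$ and succeeds when $p_{k+1}<b$, and $i=a$, which forces $p_{k+2},\dots,p_{2k+2}>i$ with values $\le i$ and succeeds when $a<p_{k+2}$.

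The main obstacle is that positions and values are a priori unrelated, so neither candidate is guaranteed to work on its own; the crux is to see that at least one must. Indeed, if both failed we would have $p_{k+1}\ge b$ and $a\ge p_{k+2}$, whence $p_{k+1}\ge b>a\ge p_{k+2}$, contradicting $p_{k+1}<p_{k+2}$. Hence one of the two thresholds is crossed by $k+1$ entries (using \eqref{eq:sym} to translate the second case back into the defining count), so $\pi\notin\A^{(k)}_n$. I expect the bookkeeping of this case split, together with the correct invocation of the symmetry \eqref{eq:sym}, to be the only delicate point; everything else is a direct dictionary between crossings of a threshold and occurrences of the pattern.
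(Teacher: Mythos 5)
Your proof is correct and follows essentially the same route as the paper's: the first inclusion is argued identically via the symmetry (\ref{eq:sym}), and the converse is the same contrapositive threshold-crossing case split, also invoking (\ref{eq:sym}) in the second case. The only cosmetic difference is that the paper works with the single threshold $i_{k+1}$ throughout the converse---observing from the shape of the patterns that either all of the first $k+1$ values exceed $i_{k+1}$ or all of the last $k+1$ values are at most $i_{k+1}$---whereas you introduce the two candidate thresholds $p_{k+1}$ and $a$ and rule out simultaneous failure via $p_{k+1}\ge b>a\ge p_{k+2}$, contradicting $p_{k+1}<p_{k+2}$.
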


\begin{proof}
If $\pi\in\S_n\setminus\A^{(k)}_n$, then there exist indices
$i,j_1,j_2,\ldots,j_{k+1}$ such that $j_1<j_2<\ldots<j_{k+1}\le i$
and $\pi(j_1),\pi(j_2),\ldots,\pi(j_{k+1})>i$. By
equation~(\ref{eq:sym}), this also implies that there are indices
$\ell_1,\ell_2,\ldots,\ell_{k+1}$ such that $i<\ell_1<\ell_2<\ldots<\ell_{k+1}$ and
$\pi(\ell_1),\pi(\ell_2),\ldots,\pi(\ell_{k+1})\le i$.
But now $\pi(j_u)>\pi(\ell_v)$ for all $1\le u,v\le k+1$, so $\pi(j_1)\dots\pi(j_{k+1})\pi(\ell_1)\dots\pi(\ell_{k+1})$ forms an
occurrence in $\pi$ of a pattern in $\Sigma^{(k)}$.

To prove the reverse inclusion, assume that $\pi\in\S_n$ has an
occurrence $\pi(i_1)\pi(i_2)\dots\pi(i_{2k+2})$ (with $i_1<i_2<\dots<i_{2k+2}$) of one of the
patterns in $\Sigma^{(k)}$. If
$\pi(i_1),\pi(i_2),\ldots,\pi(i_{k+1})>i_{k+1}$, then
$\pi\not\in\A^{(k)}_n$ and we are done. Otherwise,
$\pi(i_{k+2}),\pi(i_{k+3}),\ldots,\pi(i_{2k+2})\le i_{k+1}$, because
of the shape of the patterns in $\Sigma^{(k)}$. But then, again by
(\ref{eq:sym}),
there must be $k+1$ indices $j_1,\ldots,j_{k+1}\le i_{k+1}$ such
that $\pi(j_1),\ldots,\pi(j_{k+1})>i_{k+1}$. Thus,
$\pi\not\in\A^{(k)}_n$ in this case either.
\end{proof}

\section{The $X$-class}\label{sec:xclass}

Consider two crossing lines in the plane with slopes $1$ and $-1$,
forming an $X$-shape. Place $n$ points anywhere on these lines, with no two of them having the same $x$- or $y$-coordinate,
and label them $1,2,\ldots,n$ by increasing $y$-coordinate. Reading the labels of
the points by increasing $x$-coordinate determines a permutation. The set of
permutations obtained in this way is called the $X$-class. This set
was studied by Waton~\cite{Wat}, who also showed that these are
precisely those permutations that avoid the patterns
$2143$, $2413$, $3142$, and $3412$. In other words, if we denote by $\X_n$
the set of permutations of length $n$ in the $X$-class, we have that $\X_n=\S_n(2143,2413,3142,3412)$.

\subsection{Enumeration of the $X$-class}\label{sec:enumxclass}

We will show that $\X_n$ is in bijection with the set
$\W_n$ of words of length $n-1$ over the alphabet
$\{W,E,L,R\}$ not containing any occurrence (in consecutive
positions) of $LE$ or $RW$, and always ending with a $W$ or an $E$.

It will be convenient to represent a permutation $\pi\in\S_n$ as an
$n\times n$ array that contains dots in positions $(i,\pi(i))$, for
$i=1,2,\dots,n$ (see Figures~\ref{fig:almostincr}
and~\ref{fig:xclass}). We use the convention that column numbers
increase from left to right and row numbers increase from bottom to
top. The following property of the permutations in the $X$-class
will be useful to define the bijection.

\begin{figure}[hbt]
\epsfig{file=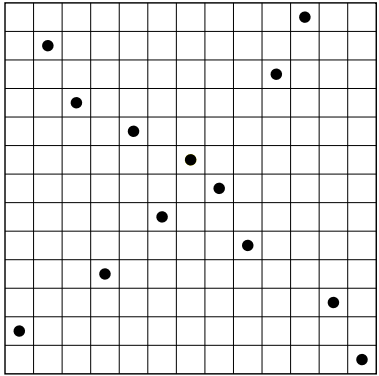,height=5cm} \caption{\label{fig:xclass} A
permutation in the $X$-class.}
\end{figure}

\begin{lemma}\label{lem:corner}
If $\pi\in\X_n$, then the array of $\pi$ contains a dot in at least
one of the four corners.
\end{lemma}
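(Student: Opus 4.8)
The plan is to argue by contradiction, exploiting the common structure of the four patterns that define $\X_n$. The four corners of the array carry dots precisely when $\pi(1)\in\{1,n\}$ (top-left or bottom-left) or $\pi(n)\in\{1,n\}$ (top-right or bottom-right). So I would assume, for contradiction, that no corner is occupied, which forces $2\le\pi(1)\le n-1$ and $2\le\pi(n)\le n-1$. Writing $a=\pi(1)$ and $b=\pi(n)$, I set $m=\min(a,b)$ and $M=\max(a,b)$; since $\pi$ is a permutation these are distinct, so $2\le m<M\le n-1$.

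The key observation is that each of $2143,2413,3142,3412$ has its two \emph{central} values $2$ and $3$ in the first and last positions, and its two \emph{extreme} values $1$ and $4$ in the two middle positions. Indeed, these four are exactly the length-$4$ patterns $\sigma$ with $\{\sigma_1,\sigma_4\}=\{2,3\}$ and $\{\sigma_2,\sigma_3\}=\{1,4\}$. Consequently, to produce an occurrence of one of them it suffices to exhibit indices $1<i_2<i_3<n$ such that one of $\pi(i_2),\pi(i_3)$ is smaller than $m$ and the other is larger than $M$: then the subsequence $\pi(1),\pi(i_2),\pi(i_3),\pi(n)$ has $a,b$ as its two central values in the outer positions and the two extremes in the inner positions, hence matches one of the four patterns regardless of the relative order of $a$ and $b$ or of the small and large entries.

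It then remains only to locate such $i_2,i_3$, and this is a pigeonhole remark. Since $m\ge2$ the value $1$ occurs somewhere, and since $M\le n-1$ the value $n$ occurs somewhere; moreover neither can lie in column $1$ or column $n$, because those columns hold $a,b\in\{m,\dots,M\}$. Thus $\pi^{-1}(1)$ and $\pi^{-1}(n)$ are distinct interior indices in $\{2,\dots,n-1\}$, one carrying a value $<m$ and the other a value $>M$; ordering them gives the desired pair $1<i_2<i_3<n$, producing a forbidden pattern and contradicting $\pi\in\X_n$.

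I expect no serious obstacle here: the whole argument turns on the single structural observation that the four forbidden patterns are exactly those with central values on the outside and extreme values in the middle. Once that is in hand, anchoring the outer two positions of the pattern at the first and last columns reduces the lemma to the triviality that $1$ and $n$ must occupy interior columns. The only point needing a moment's care is verifying that both orderings of $\{a,b\}$ and of the two extreme entries genuinely land in the list $\{2143,2413,3142,3412\}$, which is immediate from the characterization above.
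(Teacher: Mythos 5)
Your proposal is correct and follows essentially the same route as the paper: assume no corner dot, anchor the first and last entries, locate the positions of the values $1$ and $n$ in the interior, and conclude that the four entries form one of the forbidden patterns. The only difference is cosmetic---the paper handles the four cases ($i<j$ or $j<i$, $\pi(1)<\pi(n)$ or $\pi(1)>\pi(n)$) explicitly, while you absorb them into the single observation that $2143,2413,3142,3412$ are precisely the length-$4$ patterns with $\{\sigma_1,\sigma_4\}=\{2,3\}$ and $\{\sigma_2,\sigma_3\}=\{1,4\}$.
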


\begin{proof}
The statement of the lemma is equivalent to the fact that at
least one of these four conditions holds: $\pi(1)=1$, $\pi(1)=n$,
$\pi(n)=1$, $\pi(n)=n$. Let us assume for contradiction that none of
them holds. Let $i$ be the index such that $\pi(i)=1$, and let $j$
be such that $\pi(j)=n$. We have that $1<i,j<n$. If $i<j$, the
subsequence $\pi(1)\pi(i)\pi(j)\pi(n)$ forms an occurrence of $2143$
or $3142$, depending on whether $\pi(1)<\pi(n)$ or $\pi(1)>\pi(n)$,
respectively. Similarly, if $j<i$, then $\pi(1)\pi(j)\pi(i)\pi(n)$
forms an occurrence of $2413$ or $3412$. In any case, this
contradicts the fact that $\pi\in\X_n$.
\end{proof}

If the row and column containing a corner dot in the array of
$\pi\in\X_n$ are removed, then the resulting array determines a
permutation in $\X_{n-1}$. Furthermore, if we take the array of any permutation
in $\X_{n-1}$ and we add a row and a column intersecting at a
corner, with a dot in that corner, then we obtain the array of a
permutation in $\X_n$. The reason is that the new corner dot cannot
be part of an occurrence of any of the patterns
$2143,2413,3142,3412$.

\ms

Next we describe the bijection $\bwx:\W_n\longrightarrow\X_n$. Let
a word $w\in\W_n$ be given. Starting from an empty $n\times n$ array,
read $w$ from left to right, and for each letter
place a dot in an array according to the following rule. Every time an
$L$ (resp. $R$, $W$, $E$) is read, place a dot in the lower-left
(resp. lower-right, upper-left, upper-right) corner of the unshaded
region, and shade the row and column of the new dot. After reading
the whole word $w$, only one square remains unshaded. Place a dot in
the unshaded square. We define $\bwx(w)$ to be the permutation whose
array is constructed in this way. See the right side of Figure~\ref{fig:awx} for an example.

The inverse of $\bwx$ can be easily described. We start with the
array of a permutation $\pi\in\X_n$, with no shaded squares. We will
successively shade some squares as we write a word $w\in\W_n$. At
each step, the permutation given by the dots in the unshaded area
belongs to the $X$-class, so by Lemma~\ref{lem:corner} one of its
corners must contain a dot. If two of the corners (which are
necessarily opposite) contain a dot, we choose the dot in the upper
corner. If this dot is in the lower-left (resp. lower-right,
upper-left, upper-right) corner of the unshaded region, we append an
$L$ (resp. $R$, $W$, $E$) to the word and we shade the row and column containing
the dot. We repeat this process until the unshaded area contains
only one square.

It is clear that the procedures described in the above two
paragraphs are the inverse of each other, so $\bwx$ is a bijection
between $\W_n$ and $\X_n$. Enumerating $\W_n$ is straightforward, as the following proposition shows.

\begin{prop}\label{prop:gfw}
The generating function for the sequence $b_n=|\W_n|=|\X_n|$ is
\beq 1+\sum_{n\ge1}b_n x^n=\frac{1-3x}{1-4x+2x^2}.\label{eq:gfx}\eeq
\end{prop}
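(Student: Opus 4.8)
Since $\bwx$ is a bijection between $\W_n$ and $\X_n$, it suffices to count the words in $\W_n$. The plan is to set up a transfer matrix (equivalently, a system of linear recurrences) tracking the last letter of a word, and then translate it into generating functions. A word in $\W_n$ has length $n-1$, so I would first dispose of the case $n=1$, where the only word is the empty word, contributing the constant term. For $n\ge 2$, I would classify the words of length $m=n-1$ over $\{W,E,L,R\}$ avoiding the factors $LE$ and $RW$ by their final letter, writing $w_m,e_m,\ell_m,r_m$ for the number of such words ending in $W$, $E$, $L$, $R$, respectively. The two forbidden factors translate directly into the facts that a $W$ cannot be preceded by an $R$ and an $E$ cannot be preceded by an $L$, while $L$ and $R$ may follow any letter, giving
\beq w_m=w_{m-1}+e_{m-1}+\ell_{m-1},\quad e_m=w_{m-1}+e_{m-1}+r_{m-1},\quad \ell_m=r_m=w_{m-1}+e_{m-1}+\ell_{m-1}+r_{m-1},\eeq
with $w_1=e_1=\ell_1=r_1=1$. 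The quantity I ultimately want is $b_n=w_{n-1}+e_{n-1}$, the number of words of length $n-1$ ending in $W$ or $E$.

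Next I would exploit the symmetry of the constraints under the involution that simultaneously swaps $W\leftrightarrow E$ and $L\leftrightarrow R$: this exchanges the two forbidden factors $LE$ and $RW$ and preserves the condition of ending in $W$ or $E$, so $w_m=e_m$ and $\ell_m=r_m$ for all $m$. Setting $p_m=w_m=e_m$ and $q_m=\ell_m=r_m$ collapses the system to $p_m=2p_{m-1}+q_{m-1}$ and $q_m=2p_{m-1}+2q_{m-1}$, with $p_1=q_1=1$. Passing to the generating functions $P(x)=\sum_{m\ge1}p_mx^m$ and $Q(x)=\sum_{m\ge1}q_mx^m$, the recurrences become the linear system $(1-2x)P-xQ=x$ and $-2xP+(1-2x)Q=x$, whose determinant is exactly $1-4x+2x^2$; solving for $P$ yields $P(x)=x(1-x)/(1-4x+2x^2)$.

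Finally I would assemble the answer. Since $b_n=2p_{n-1}$ for $n\ge2$ and $b_1=1$, we have $\sum_{n\ge1}b_nx^n=x+2xP(x)$, and then adding the constant $1$ and clearing denominators should collapse the numerator to $1-3x$, yielding the claimed identity. I do not anticipate a genuine obstacle here: the only points requiring care are the correct reading of the two forbidden factors into the recurrences (it is easy to flip which letter is the forbidden predecessor), the treatment of the empty word in the case $n=1$, and checking that the algebra in the last step indeed telescopes to $1-3x$ rather than leaving a larger polynomial. As a sanity check one can compute $b_1=1$, $b_2=2$, $b_3=6$, $b_4=20$, which match both the recurrence and the Taylor expansion of the proposed rational function.
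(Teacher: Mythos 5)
Your proof is correct, but it reaches the generating function by a genuinely different route than the paper. The paper proves the single second-order recurrence $b_n=4b_{n-1}-2b_{n-2}$ (for $n\ge3$) directly: prepending any of the four letters to a word of $\W_{n-1}$ produces a word of $\W_n$ except when the first two letters form $LE$ or $RW$, and those bad words are counted by $2b_{n-2}$, since deleting an initial $E$ or $W$ from a word of $\W_{n-1}$ gives a bijection onto $\W_{n-2}$. You instead run the standard transfer-matrix method: classify words by their last letter, use the symmetry $W\leftrightarrow E$, $L\leftrightarrow R$ (which swaps the two forbidden factors) to collapse four states to two, and solve a $2\times 2$ linear system. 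I checked your details: the state recurrences are right, $P(x)=x(1-x)/(1-4x+2x^2)$ is correct, and the final algebra does telescope as you hoped, since $(1+x)(1-4x+2x^2)+2x^2(1-x)=1-3x$. Your method is more mechanical and generalizes immediately to any finite set of forbidden factors, at the cost of more bookkeeping; the paper's prepend-and-subtract argument is shorter and produces the recurrence combinatorially, with the denominator $1-4x+2x^2$ --- which in your account arises as the determinant of the transfer system --- read off at once. One small slip in your prose: the empty word of $\W_1$ contributes the $n=1$ term $x$, not the constant term (the constant $1$ is the $n=0$ convention); your actual computation handles this correctly. Incidentally, your sanity-check values $b_1=1$, $b_2=2$, $b_3=6$, $b_4=20$ are right, and they expose a typo in the paper's own proof, which states the initial terms as $b_1=1$ and $b_2=1$; with $b_2=1$ the recurrence would not reproduce the claimed generating function.
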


\begin{proof}
We show that the sequence satisfies the recurrence
\beq\label{eq:recb} b_n=4b_{n-1}-2b_{n-2} \eeq for $n\ge3$. To see
this, notice that to construct a word in $\W_n$ we have 4 choices
for the first letter, which can be followed by any word in
$\W_{n-1}$, except for the case where the first two letters create
an occurrence of either $LE$ or $RW$, followed by a word in
$\W_{n-2}$. Using that the initial terms are $b_1=1$ and $b_2=1$, we
get the generating function above.
\end{proof}

\subsection{A bijection with lattice paths}

The $X$-class is also equinumerous to the following family of paths on the integer lattice
$\mathbb{Z}^2$. Let $\P_n$ be the set of paths from $(0,0)$ to $(2n-2,0)$
with steps $U=(1,1)$ and $D=(1,-1)$ whose second coordinate
satisfies $|y|\le3$. We next describe a simple bijection between
$\W_n$ and $\P_n$.

Given a word $w\in\W_n$, we divide it into blocks by inserting a divider after every $E$ and after
every $W$. From each block we construct a piece of the path returning to the $x$-axis according to
the following rules.
\begin{itemize}
\item If the block contains only one letter, the corresponding piece of path is $UD$ if the
letter is an $E$, and $DU$ if the letter is a $W$.
\item Otherwise, the block ends in $RE$ or $LW$, preceded by a sequence of $R$s and $L$s.
\bit \item If the block ends in $RE$, start the corresponding piece
of path with $UU$, followed by $UD$ (resp. $DU$) for each $R$ (resp.
$L$) in the sequence, from left to right, not including the $R$
immediately before the $E$, and end with $DD$. \item If the block
ends in $LW$, start the piece of path with $DD$, followed by $DU$
(resp. $UD$) for each $R$ (resp. $L$) in the sequence, from left to
right, not including the $L$ immediately before the $W$, and end
with $UU$. \eit
\end{itemize}

For example, if $w=WRLWERLRE$, the path is $DUDDDUUUUDUUUDDUDD$,
which is drawn in Figure~\ref{fig:path3}. This bijection has additional properties.
Recall that {\em return} of a path is a step whose right
endpoint is on the $x$-axis (for example, the path in
Figure~\ref{fig:path3} has four returns). In our
construction, $E$s in the word are mapped to returns
from above in the path, and $W$s are mapped to returns from below.
Also, $R$s not followed by an $E$ correspond to points in the path
with $|y|=3$.

\begin{figure}[hbt]
\epsfig{file=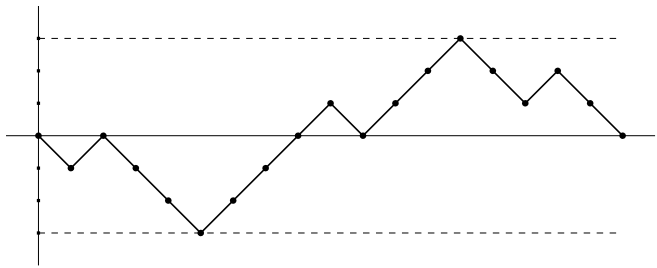,height=35mm} \caption{\label{fig:path3} An
element of $\P_{10}$.}
\end{figure}

It is an easy exercise to check directly that the generating function for these paths is
given by equation (\ref{eq:gfx}) as well.

\section{From the $X$-class to almost-increasing permutations}\label{sec:bija1}

In this section we show that there is a bijection between the $X$-class and $1$-almost-increasing permutations.
In terms of pattern avoidance, we will prove that $|\S_n(2143,2413,3142,3412)|=|\S_n(3412,3421,4312,4321)|$ for all $n$.
This equality follows from \cite{Knu,Wat}, but no bijective proof of it was known.

In Section~\ref{sec:enumxclass} we described a bijection $\bwx:\W_n\longrightarrow\X_n$. Next we present a bijection $\baw$ between $\A^{(1)}_n$ and $\W_n$ which, composed with
$\bwx$, will give the desired bijection between $\A^{(1)}_n$ and $\X_n$.

To construct $\baw(\pi)$ for a given $\pi\in\A^{(1)}_n$, we read its
entries $\pi(1),\pi(2),\ldots,\pi(n-1)$ from left to right, and
write a sequence of letters in the alphabet $\{W,E,L,R\}$ as
follows.

Assume that $\pi(i)$ is the current entry. Let $m=|\{j>i :
\pi(j)<\pi(i)\}|$. We consider three cases:
\begin{itemize}
\item If $m=0$, write a $W$.
\item If $m=1$, write an $E$.
\item If $m\ge2$, read the next $m-1$ entries, namely
$\pi(j)$ for $j=i+1,\ldots,i+m-1$. For each entry $\pi(j)$, write an
$R$ if $\pi(j)=j$, and write an $L$ otherwise. Next, if the last
letter (the one corresponding to $\pi(i+m-1)$) was an $R$ (resp. an
$L$), write an $E$ (resp. a $W$) right after it.
\end{itemize}
The entry to the right of the last one that was read becomes the new current entry. We repeat this process until entry
$\pi(n-1)$ has been read.

\ms

For example, let $\pi=(5,2,1,4,3,7,6,10,8,13,11,9,12)\in\A^{(1)}_{13}$. We
start with $\pi(1)=5$, so $m=4$. The next $m-1$ entries are
$\pi(2)=2$, $\pi(3)=1$, and $\pi(4)=4$, so the first letters of
$\baw(\pi)$ are $RLR$, followed by an $E$. The next entry is $3$,
and now $m=0$, so we write a $W$. Next we look at the entry $7$, so
$m=1$ and we write an $E$. For the entry $6$, $m=0$ again so we
write a $W$. The next entry is $10$, which means that $m=2$, so we
have to look at $\pi(9)=8$ and write an $L$, followed by a $W$ (see Figure~\ref{fig:awx}).
We now read $13$, so $m=3$, and we have to read the entries
$\pi(11)=11$, $\pi(12)=9$, and write $RL$, followed by a $W$.
At this point, all the entries up to $\pi(n-1)$ have been read, so
the word obtained is $\baw(\pi)=RLREWEWLWRLW$, and
$\bwx(\baw(\pi))=(2, 12, 10, 4, 9, 6, 8, 7, 5, 11, 13, 3, 1)$. As
another example, if $\pi=(2,6,1,4,5,8,7,3,10,9,11)$, then
$\baw(\pi)=ELRREREWEW$, and $\bwx(\baw(\pi))=(1, 8, 6, 5, 7, 9, 4,
10, 3, 2, 11)$.

\begin{figure}[hbt]
\epsfig{file=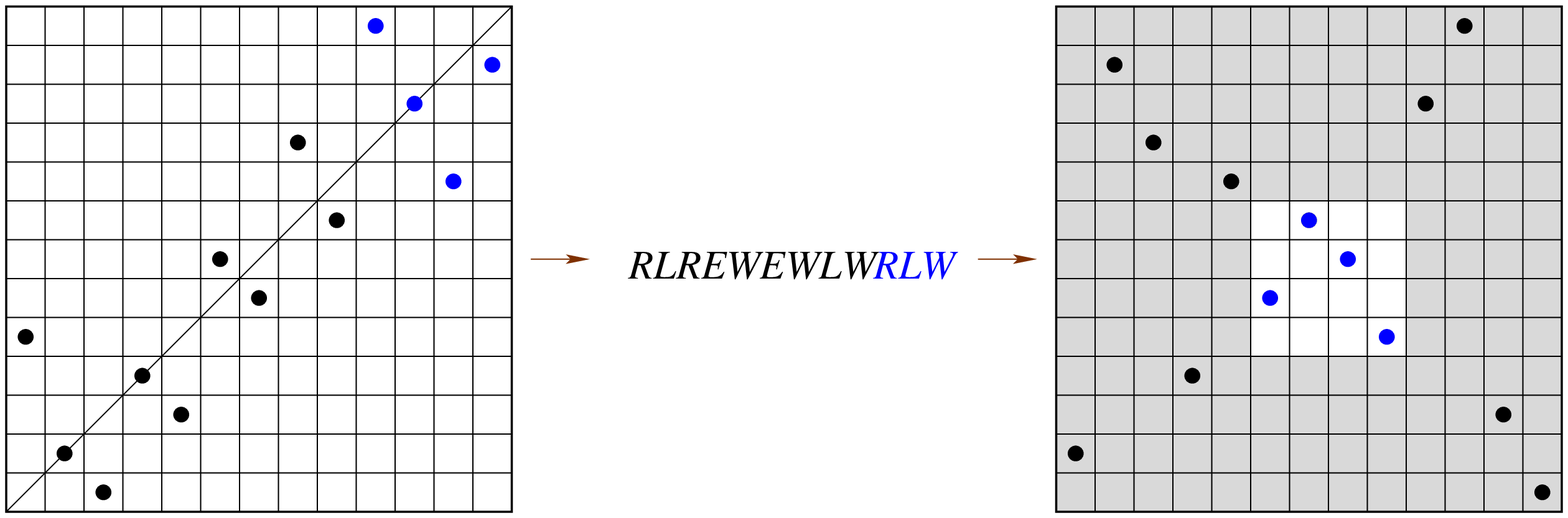,height=5cm} \caption{\label{fig:awx} The
construction of $\baw(\pi)$ and $\bwx(\baw(\pi))$.}
\end{figure}

\begin{prop}
The map $\baw$ described above is a bijection between $\A^{(1)}_n$ and $\W_n$.
\end{prop}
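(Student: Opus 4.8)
The plan is to verify that $\baw$ lands in $\W_n$ and then to exhibit an explicit inverse $\psi\colon\W_n\to\A^{(1)}_n$. The starting observation is that the block decomposition defining $\baw$ records, for the leading entry $\pi(i)$ of each block, exactly the number $m=|\{j>i:\pi(j)<\pi(i)\}|$. Writing $V_i=\{\pi(i),\pi(i+1),\dots,\pi(n)\}$ for the set of values not yet used in positions $1,\dots,i-1$, we have $m=|\{v\in V_i:v<\pi(i)\}|$, so $\pi(i)$ is the $(m+1)$-st smallest element of $V_i$. Since such an element exists, $m+1\le|V_i|=n-i+1$, i.e.\ $i+m\le n$; hence a block never reads past position $n-1$, the blocks tile $\{1,\dots,n-1\}$, and $\baw(\pi)$ has length $n-1$. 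That $\baw(\pi)\in\W_n$ then follows directly: an $E$ is written only immediately after an $R$ (the case $m\ge2$) or on its own (the case $m=1$, preceded by the $E$ or $W$ closing the previous block), so $LE$ never occurs; symmetrically $RW$ never occurs; and the word ends in the $E$ or $W$ closing the final block.

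Next I would define $\psi$ by running this correspondence backwards. Given $w\in\W_n$, cut it into blocks after each $E$ and each $W$. From a block $B$ read off $m$: set $m=0$ if $B=W$, $m=1$ if $B=E$, and $m=|B|$ otherwise. Processing blocks left to right while maintaining the position $i$ and the set $V$ of still-unused values, I would assign the block leader $\pi(i)$ to be the $(m+1)$-st smallest element of $V$, then read the letters of $B$ before its final $E$ or $W$ from left to right, assigning to the corresponding position $j$ the value $\pi(j)=j$ when the letter is $R$ and $\pi(j)=\min V$ when it is $L$ (removing each assigned value from $V$), before advancing $i$ by $|B|$. After the last block a single value remains in $V$, and I assign it to position $n$.

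The substance of the proof is the verification that $\psi$ and $\baw$ are mutually inverse, and the main obstacle is a structural lemma describing how values sit inside a block of a permutation in $\A^{(1)}_n=\S_n(3412,3421,4312,4321)$. Concretely, I would prove that for the leading entry $\pi(i)$ of a block with $m\ge2$, every position $j$ with $i<j<i+m$ carries a value below $\pi(i)$ (so it is one of the $m$ small values), that the fixed points among these positions are exactly the $R$'s, and---the delicate point---that each non-fixed entry there equals the current minimum $\min V$ of the remaining values. The forbidden patterns are exactly what force these facts: a within-block entry exceeding $\pi(i)$, or a non-fixed within-block entry strictly above the running minimum, would create one of $3412,3421,4312,4321$ together with $\pi(i)$ and two of the small values. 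Granting this lemma, the general identity ``$\pi(i)$ is the $(m+1)$-st smallest of $V_i$'' gives $\psi\circ\baw=\mathrm{id}$ on $\A^{(1)}_n$; conversely one checks that $\psi(w)$ is always a genuine permutation (the value $j$ demanded by an $R$ is still available, and $\min V$ is never equal to its position at an $L$) lying in $\A^{(1)}_n$, which yields $\baw\circ\psi=\mathrm{id}$. I expect pinning down this greedy structural description---showing the ``$L=\min V$'' assignment is forced and that the fixed-point demands of the $R$'s never collide---to be the crux, since it is the only place where the $1$-almost-increasing hypothesis is genuinely used.
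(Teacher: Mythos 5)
Your plan is correct in substance and rests on exactly the same combinatorial facts as the paper's proof: within a block led by $\pi(i)$ with $m\ge2$, every non-leading entry is either a fixed point (an $R$) or the smallest still-unused value (an $L$), and the closing $E$/$W$ is forced. The difference is in packaging. The paper proves the proposition recursively: it conditions on $\pi(1)$, deletes the rows and columns of the first block's dots to land back in $\A^{(1)}_{n-m}$, and observes that the defining inequality immediately forces $\pi(2)\le2,\dots,\pi(m)\le m$, leaving exactly two available values at each slot ($j$ itself or the unique unused value below $j$); the parallel recursive structure of $\W_n$ then yields bijectivity, with the recurrence for $a^{(1)}_n$ as a by-product. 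You instead build an explicit global inverse with bookkeeping via the unused-value set $V$, which is essentially the unrolled recursion; your observation that the block leader is the $(m+1)$-st smallest element of $V_i$ is a nice global reformulation that the paper never states (it is hidden in the relabeling after deletion). The cost of your route is that the invariants you flag as the crux (value $j$ available at each $R$, $\min V<j$ at each $L$, exactly two unused values $\le j$ inside a block) must be proved globally, whereas the paper's deletion trick reduces them to the transparent first-block case. One concrete caution about your sketch: deriving the ``non-fixed entry equals $\min V$'' claim from the patterns $3412,3421,4312,4321$ using ``$\pi(i)$ together with two of the small values'' does not cover the case $u<\pi(j)<j$ when $u=\min V_j$ is the \emph{only} unused value below $\pi(j)$; there you should instead take two positions $p_1<p_2<j$ carrying values above $\pi(j)$ (a counting argument shows at least $j-\pi(j)+1\ge2$ such positions exist) and use $\pi(p_1)\pi(p_2)\pi(j)\,u$ as the occurrence --- or, more simply, argue from the defining inequality and the two-unused-values count, as the paper does, rather than through the pattern characterization at all.
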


\begin{proof}
Consider the following recursive construction of permutations in $\A^{(1)}_n$. Let $\pi\in\A^{(1)}_n$,
represented as an $n\times n$ array of dots as described above.

If $\pi(1)=1$ (which corresponds to $m=0$ in the bijection),
deleting the first row and the first column of the array we obtain a
permutation $\pi'\in\A^{(1)}_{n-1}$, and every permutation in $\A^{(1)}_{n-1}$
can be obtained in this way. In this case, $\baw$ maps $\pi$ to the
word $W\baw(\pi')$.

If $\pi(1)=2$ (which corresponds to $m=1$ in the bijection), again
deleting row 1 and column 2 in the array,
we obtain the array of a permutation $\pi'\in\A^{(1)}_{n-1}$, and every
permutation in $\A^{(1)}_{n-1}$ can be obtained in this way. In this case,
$\baw$ maps $\pi$ to the word $E\baw(\pi')$.

If $\pi(1)\ge3$, we let $m=\pi(1)-1$. By definition of $\A^{(1)}_n$,
this forces $\pi(2)\le2, \pi(3)\le3, \ldots, \pi(m)\le m$. Besides,
deleting the rows and columns of the dots in the array corresponding
to the first $m$ entries of $\pi$, we obtain the array of a
permutation $\pi'\in\A^{(1)}_{n-m}$, and every permutation in
$\A^{(1)}_{n-m}$ can be obtained in this way. Note that there are
two choices for $\pi(2)$, namely $\{1,2\}$, two choices for
$\pi(3)$, namely $\{1,2,3\}\setminus\{\pi(2)\}$, and in general two
choices for each $\pi(j)$ for $k=2,\ldots,m$, one being $m$ and the
other being the value in $\{1,\ldots,m-1\}$ that is not attained by
any of $\pi(2),\ldots,\pi(m-1)$. These choices determine whether
each one of the first $m-1$ entries of $\baw(\pi)$ is an $R$ or an
$L$. The $m$th entry, an $E$ or a $W$, is then determined by the
condition that the word must avoid occurrences of $LE$ or $RW$. The
remaining $n-m$ letters of $\baw(\pi)$ are just $\baw(\pi')$.
\end{proof}

The above recursive description of $\A^{(1)}_n$ shows that
the numbers $a^{(1)}_n=|\A^{(1)}_n|$ satisfy the recurrence
$$a^{(1)}_n=2a^{(1)}_{n-1}+\sum_{m=2}^{n-1}2^{m-1}a^{(1)}_{n-m}$$ for $n\ge2$, with
initial condition $a_1=1$. This recurrence is equivalent to
(\ref{eq:recb}). The above proof implies that the sets $\W_n$ admit a
parallel recursive construction: any $w\in\W_n$ can be written as
either a $W$ or an $E$ followed by a word in $\W_{n-1}$, or a
sequence of $m-1$ (with $m\ge2$) $R$'s and $L$'s followed by the
letter in $\{W,E\}$ that does not create an occurrence of $LE$ or
$RW$, followed by a word in $\W_{n-m}$.

\section{Enumeration of almost-increasing permutations}\label{sec:enumaip}

For $k\ge0$, $n\ge1$, let $a^{(k)}_n=|\A^{(k)}_n|$, and let
$a^{(k)}_0=1$ by convention. For each $k$, define the generating
function
$$A^{(k)}(x)=\sum_{n\ge0}a^{(k)}_n x^n.$$
An expression for $A^{(1)}(x)$ follows from~\cite{Knu} or, alternatively, from Proposition~\ref{prop:gfw} and the
bijection in Section~\ref{sec:bija1}.

\begin{corollary}\label{cor:gfa1}
$$A^{(1)}(x)=\frac{1-3x}{1-4x+2x^2}.$$
\end{corollary}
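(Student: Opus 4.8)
The statement to prove is Corollary~\ref{cor:gfa1}, namely that
$$A^{(1)}(x)=\frac{1-3x}{1-4x+2x^2}.$$

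The plan is to obtain this directly from the bijective machinery already established, avoiding any fresh combinatorial analysis. The cleanest route proceeds in two short steps. First, I would invoke the bijection $\baw:\A^{(1)}_n\longrightarrow\W_n$ proved in the preceding proposition, which gives $a^{(1)}_n=|\A^{(1)}_n|=|\W_n|=b_n$ for all $n\ge1$. Since $a^{(1)}_0=1=b_0$ by the stated conventions (the empty word and the identity on the empty set), the two sequences agree termwise. Second, I would apply Proposition~\ref{prop:gfw}, which already computes the generating function $1+\sum_{n\ge1}b_n x^n=\frac{1-3x}{1-4x+2x^2}$. Because $A^{(1)}(x)=\sum_{n\ge0}a^{(1)}_n x^n=1+\sum_{n\ge1}b_n x^n$, the two expressions coincide and the corollary follows immediately.

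An entirely self-contained alternative, independent of the word bijection, would use the recurrence for $a^{(1)}_n$ displayed just after the proof of the proposition, namely
$$a^{(1)}_n=2a^{(1)}_{n-1}+\sum_{m=2}^{n-1}2^{m-1}a^{(1)}_{n-m}$$
for $n\ge2$. The strategy here is to show this recurrence is equivalent to the two-term recurrence $b_n=4b_{n-1}-2b_{n-2}$ of equation~(\ref{eq:recb}). Writing $c_n=a^{(1)}_n$, one telescopes by comparing the convolution sum for $c_n$ with that for $c_{n-1}$: the quantity $c_n-2c_{n-1}$ and $c_{n-1}-2c_{n-2}$ both contain the same geometric tail $\sum 2^{m-1}c_{n-m}$ shifted by one index, so subtracting $2$ times the $(n-1)$ relation from the $n$ relation collapses the entire sum into a single correction term and yields the desired linear recurrence with constant coefficients. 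Solving that recurrence by standard generating-function methods, with the initial data $c_0=1$, $c_1=1$, reproduces the rational function above.

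I expect the bijective route to be essentially immediate, so the only real content is bookkeeping about the index $n=0$ and confirming that Proposition~\ref{prop:gfw} is being cited with the summation starting at the correct term. In the recurrence-based alternative, the main obstacle is the algebraic manipulation that converts the variable-length convolution into the fixed-length recurrence; one must be careful with the boundary terms at the top and bottom of the summation range when forming $c_n-2c_{n-1}$, since an off-by-one error there changes the characteristic polynomial. Given the bijection is already in hand, I would present the first argument as the proof and mention the recurrence equivalence, which is stated in the text, only as a remark confirming consistency.
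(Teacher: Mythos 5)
Your proof is correct and takes essentially the same approach as the paper: the text obtains Corollary~\ref{cor:gfa1} precisely by combining the bijection $\baw$ of Section~\ref{sec:bija1} (composed with $\bwx$) with the generating function of Proposition~\ref{prop:gfw}, exactly as in your first argument. Your alternative via the convolution recurrence is also consistent with the paper, which notes that the recurrence for $a^{(1)}_n$ is equivalent to~(\ref{eq:recb}).
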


In this section we generalize this result by finding simple expressions for $A^{(k)}(x)$ for any $k$.
As we will see, all these generating functions are rational. Similar expressions for $A^{(k)}(x)$
have been found by Atkinson~\cite{Atk} using inclusion-exclusion to obtain recurrence relations for the numbers $a^{(k)}_n$.

\subsection{A map to Motzkin paths}

Recall that a Motzkin path of length $n$ is a lattice path from $(0,0)$ to $(n,0)$ with steps $U=(1,1)$,
$D=(1,-1)$, $L=(1,0)$ with the condition that it never goes below the $x$-axis. The {\em height} of a step is
the $y$-coordinate of its right endpoint. The height of a path is the maximum height of any of its
steps.
A key ingredient in the enumeration of almost-increasing permutations will be a map from permutations to
Motzkin paths.

Given the representation of a permutation $\pi$ as an $n\times n$
array, we will depict its cycles in the following way. Take any index
$i_1\in\{1,2,\dots,n\}$. If $i_1$ is a fixed point, then it forms a
cycle of length 1. Otherwise, let $i_2=\pi(i_1)$, and draw a vetical
line in the array from the center of the square $(i_1,i_1)$ to the
center of the square $(i_1,i_2)$, followed by a horizontal line from
$(i_1,i_2)$ to $(i_2,i_2)$. Now we let $i_3=\pi(i_2)$ and repeat
the process, until we eventually return to $(i_1,i_1)$ after going
through all the elements of the cycle containing $i_1$. We do this
for each cycle of $\pi$, obtaining a picture like the example in
Figure~\ref{fig:cycles}. We call this the {\em cycle diagram} of
$\pi$.

\begin{figure}[hbt]
\epsfig{file=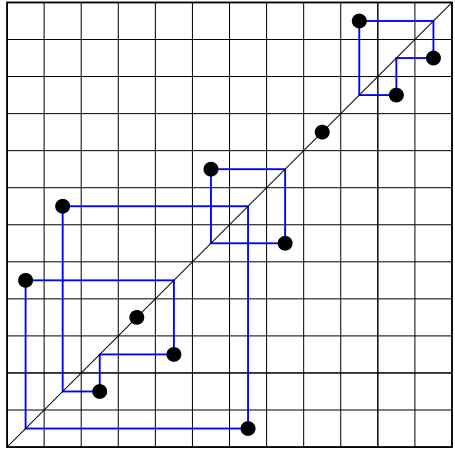,height=6cm}
\caption{\label{fig:cycles} The cycle diagram of $\pi=(5,7,2,4,3,8,1,6,9,12,10,11)$.}
\end{figure}

The squares in the diagonal of the cycle diagram, namely, those of the form $(i,i)$, can be
classified into five types: \bit
\item a {\em fixed point} \fixed,
\item an {\em opening bracket} \open,
\item a {\em closing bracket} \close,
\item an {\em upper bounce} \ubounce,
\item a {\em lower bounce} \lbounce.
\eit
 The sequence of types of the squares in the diagonal of the cycle diagram of $\pi$, read from
bottom-left to top-right, will be called the {\em diagonal sequence}
of $\pi$, and denoted $D(\pi)$. Note that
$D(\pi)\in\{$\fixed,\open,\close,\ubounce,\lbounce$\}^n$. We map
the permutation to a Motzkin path of length $n$ by considering its
diagonal sequence and drawing an up step $U$ for each \open, a
down step $D$ for each \close, and a level step $L$ for each \fixed, \ubounce\ or \lbounce.
Let us denote $\theta(\pi)$ the path
defined in this way. For example, if $\pi$ is the permutation in Figure~\ref{fig:cycles}, $\theta(\pi)$ is the path in Figure~\ref{fig:motzkin}.

\begin{figure}[hbt]
\epsfig{file=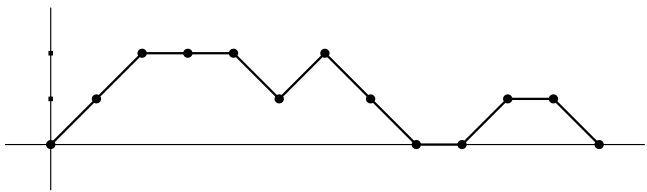,height=25mm}
\caption{\label{fig:motzkin} The Motzkin path $\theta(\pi)$, where $\pi=(5,7,2,4,3,8,1,6,9,12,10,11)$.}
\end{figure}

\begin{prop}\label{prop:height}
Let $\pi\in\S_n$. Then $\pi\in\A^{(k)}_n$ if and only if the height
of $\theta(\pi)$ is at most $k$.
\end{prop}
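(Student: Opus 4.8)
The plan is to show that the height of $\theta(\pi)$ at each abscissa records exactly the statistic appearing in the definition of $\A^{(k)}_n$. For $0\le i\le n$, write $c(i)=|\{j : j\le i \text{ and } \pi(j)>i\}|$ for the quantity bounded in the definition of $k$-almost-increasing permutations, and let $h(i)$ denote the height of $\theta(\pi)$ after its first $i$ steps, so that $h(i)$ equals the number of \open\ squares minus the number of \close\ squares among the diagonal squares $(1,1),\dots,(i,i)$. Since $c(0)=h(0)=0$, it suffices to prove that $c(i)-c(i-1)$ equals $+1$, $-1$, or $0$ according to whether the $i$th diagonal square is an \open, a \close, or one of \fixed, \ubounce, \lbounce; this is precisely the increment of $h$.

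First I would compute the increment of $c$ directly from the array. Passing from $c(i-1)$ to $c(i)$ has two effects: column $i$ enters the range of columns being counted, adding its dot $(i,\pi(i))$ exactly when $\pi(i)>i$; and row $i$ leaves the range of rows being counted, removing the dot of row $i$, which sits in column $\pi^{-1}(i)$, exactly when that column was previously counted, i.e. when $\pi^{-1}(i)<i$. A fixed point at $(i,i)$ contributes to neither effect, and for a non-fixed point the two affected dots are distinct, so there is no double counting. Writing $[\,\cdot\,]$ for the Iverson bracket, this gives the clean identity
$$c(i)-c(i-1)=[\pi(i)>i]-[\pi^{-1}(i)<i].$$

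Next I would translate the geometric types of the diagonal square $(i,i)$ into the two comparisons $\pi(i)\gtrless i$ and $\pi^{-1}(i)\gtrless i$. The vertical segment of the cycle diagram leaves $(i,i)$ upward or downward according to whether $\pi(i)>i$ or $\pi(i)<i$, while the horizontal segment reaches $(i,i)$ from the right or from the left according to whether $\pi^{-1}(i)>i$ or $\pi^{-1}(i)<i$. Reading off the five pictures, \open\ means $\pi(i)>i$ and $\pi^{-1}(i)>i$; \close\ means $\pi(i)<i$ and $\pi^{-1}(i)<i$; \ubounce\ means $\pi(i)>i$ and $\pi^{-1}(i)<i$; \lbounce\ means $\pi(i)<i$ and $\pi^{-1}(i)>i$; and \fixed\ means $\pi(i)=i=\pi^{-1}(i)$. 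Substituting each case into the displayed identity yields exactly $+1$ for \open, $-1$ for \close, and $0$ for the three level types, matching the increment of $h$. By induction $c(i)=h(i)$ for all $i$, so $\max_i c(i)$ equals the height of $\theta(\pi)$, and $\pi\in\A^{(k)}_n$ if and only if this height is at most $k$.

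I would expect the only delicate step to be the geometric-to-algebraic dictionary in the third paragraph: one must read the four bracket pictures correctly to be sure which of them encode an excedance ($\pi(i)>i$) versus a deficiency, and whether the incoming horizontal segment signals $\pi^{-1}(i)>i$ or $\pi^{-1}(i)<i$. Identity~(\ref{eq:sym}) provides a useful consistency check, since it forces $\max_i c(i)$ to be invariant under $\pi\mapsto\pi^{-1}$; this matches the fact that transposing the array across the main diagonal swaps \ubounce\ with \lbounce\ while fixing the types \open, \close, and \fixed, and hence leaves $h$ unchanged.
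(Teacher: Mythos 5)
Your proof is correct and takes essentially the same route as the paper: the paper's proof consists exactly of the observation that $h_i=|\{j: j\le i,\ \pi(j)>i\}|$ equals the height of the $i$-th step of $\theta(\pi)$, verified by induction on $i$ via the case analysis of the five diagonal symbols, which the paper leaves to the reader. Your Iverson-bracket identity $c(i)-c(i-1)=[\pi(i)>i]-[\pi^{-1}(i)<i]$ and the geometric dictionary for the five square types simply carry out in full the case-check that the paper's proof sketches.
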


\begin{proof}
For $1\le i\le n$, let $h_i=|\{j : j\le i \mbox{ and } \pi(j)>i\}|$.
The key observation is that the height of the $i$-th step of
$\theta(\pi)$ is precisely $h_i$. This can be checked by induction on $i$, since
$h_{i}-h_{i-1}$ is $1$, $-1$, or $0$ depending on whether there is
an opening bracket, a closing bracket, or any other symbol in
position $(i,i)$, respectively.
\end{proof}

Despite the above useful property, one disadvantage of the map
$\theta$ is that it is not injective. To fix this problem, we will
modify the map using colored steps in the Motzkin paths.

\subsection{Bijections to colored Motzkin paths}

Let us first focus on the case $k=1$.

\begin{prop}
There is a bijection $\psi_1$ between $\A^{(1)}_n$ and colored
Motzkin paths of length $n$ and height at most 1 where the $L$ steps
at height 1 can receive three colors.
\end{prop}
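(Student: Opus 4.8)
The plan is to construct $\psi_1$ by refining the map $\theta$ from Proposition~\ref{prop:height}, using colors to restore injectivity. By Proposition~\ref{prop:height}, $\pi\in\A^{(1)}_n$ if and only if $\theta(\pi)$ has height at most $1$, so the target paths are automatically Motzkin paths of height at most $1$; the only issue is that $\theta$ collapses several permutations onto the same path. The first step is to identify exactly what information is lost when passing from a permutation to its diagonal sequence $D(\pi)$, and hence to $\theta(\pi)$. I would examine, position by position along the diagonal, how much freedom there is in reconstructing $\pi$ from its sequence of bracket-types, and show that the ambiguity is concentrated precisely at the diagonal squares that become $L$ steps at height $1$.

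The key structural observation to establish is the following. In a cycle diagram of height at most $1$, at any diagonal position the current height equals $h_i=|\{j\le i:\pi(j)>i\}|$, which is $0$ or $1$. When $h_i=0$ the situation is rigid: a level step at height $0$ can only be a fixed point \fixed, and there is no choice to record. When $h_i=1$, however, a level step can be either an upper bounce \ubounce\ or a lower bounce \lbounce, and moreover the way the single ``active'' strand threading through height $1$ connects to the incoming and outgoing brackets can vary. I would carefully enumerate the local configurations at a height-$1$ level step and argue that there are exactly three genuinely distinct ways the permutation can behave there, consistent with the surrounding cycle structure. This is where the number three in the statement comes from, and pinning it down precisely is the main obstacle: I must verify that these three local choices are independent across all height-$1$ level steps and that they, together with the underlying shape $\theta(\pi)$, determine $\pi$ uniquely.

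Concretely, I would define $\psi_1(\pi)$ to be the path $\theta(\pi)$ together with, on each $L$ step at height $1$, a color in $\{1,2,3\}$ recording which of the three local configurations occurs at the corresponding diagonal square. The $L$ steps at height $0$ (the fixed points) receive no color, as they carry no ambiguity. To prove this is a bijection I would exhibit an explicit inverse: given a colored Motzkin path of height at most $1$, read it left to right, and reconstruct the cycle diagram one diagonal square at a time. Each $U$ opens a new strand, each $D$ closes the currently open strand, each uncolored $L$ at height $0$ is a fixed point, and each colored $L$ at height $1$ is filled in according to the unique local rule dictated by its color. Since at height at most $1$ there is never more than one strand open at a time, the bracket matching is forced and the reconstruction is unambiguous at every step.

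The verification then splits into two routine checks that I would carry out in turn. First, the reconstruction always yields a permutation in $\A^{(1)}_n$: the resulting path has height at most $1$, so Proposition~\ref{prop:height} applies. Second, $\psi_1$ and the reconstruction are mutually inverse, which follows because the color was defined to record exactly the datum needed to recover the local configuration, and no other information is lost. The bookkeeping of the three local cases at a height-$1$ level step, and confirming that each arises from a valid permutation and is correctly detected by the inverse map, is the part requiring the most care; everything else is forced by the height-at-most-$1$ constraint, which ensures the single open strand makes the global structure essentially one-dimensional.
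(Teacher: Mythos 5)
Your overall strategy is exactly the paper's: keep the underlying path $\theta(\pi)$, restore injectivity by coloring each $L$ step at height $1$ with one of three colors recording the local configuration at the corresponding diagonal square, and invert by reading the colored path left to right, using the fact that at height at most $1$ only one strand is ever open. However, your case analysis at height $1$ contains a genuine error. You assert that a level step at height $1$ ``can be either an upper bounce \ubounce\ or a lower bounce \lbounce,'' and you source the third color from variability in ``the way the single active strand threading through height $1$ connects to the incoming and outgoing brackets.'' Both halves of this are wrong: since exactly one vertical ray and one horizontal ray are open at height $1$, a bounce has a \emph{unique} way to connect (there is no connection ambiguity to record), and the missing third configuration is in fact a \emph{fixed point} \fixed\ occurring at height $1$, i.e., inside a bracket interval. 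For example, in $\pi=321$ the diagonal sequence is \open\,\fixed\,\close, and the middle square is a fixed point whose $L$ step sits at height $1$. Your parenthetical remark that ``the $L$ steps at height $0$ (the fixed points) receive no color'' shows you have conflated fixed points with height $0$; in truth $h_i$ counts $|\{j\le i:\pi(j)>i\}|$, which can equal $1$ even when $\pi(i)=i$.

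Read literally, your scheme therefore fails to assign any color to permutations such as $321$, so $\psi_1$ is not even defined on all of $\A^{(1)}_n$, and your inverse map would never reconstruct a fixed point inside a bracket interval, breaking surjectivity of the reconstruction onto $\A^{(1)}_n$. The repair is exactly the paper's version of the count: the three colors at a height-$1$ level step correspond to the three possible diagonal symbols \fixed, \ubounce, \lbounce, each with its forced unique connection; one then checks, as the paper does, that between a matched $U$--$D$ pair any combination of these three symbols can occur, that outside bracket intervals only fixed points occur, and that such a diagonal sequence determines the permutation uniquely. With that corrected enumeration your deferred ``bookkeeping of the three local cases'' goes through and the rest of your argument (forced bracket matching, left-to-right reconstruction, mutual inverseness) is sound and matches the paper.
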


\begin{proof}
For $\pi\in\A^{(1)}_n$, Proposition~\ref{prop:height} implies that
in $D(\pi)$, the positions of opening and closing brackets
alternate, starting with an opening bracket and ending with a
closing bracket. From the construction of the cycle diagram, one sees that in
the intervals between an opening and a closing bracket, any
combination of the other three types of squares (fixed point, upper
bounce, and lower bounce) is possible, while in the intervals not
enclosed by brackets, only fixed points can occur. Besides, any
sequence in \{\fixed,\open,\close,\ubounce,\lbounce\} satisfying
these conditions uniquely determines the permutation
$\pi\in\A^{(1)}_n$ that it came from.

Define $\psi_1$ to be a variation of the map $\theta$ where each
level step at height~1 of the Motzkin path receives once of three
different colors, depending on whether the corresponding element in
the diagonal sequence is a fixed point, an upper bounce, or a lower
bounce. Then $\psi_1$ is the desired bijection.
\end{proof}

Enumerating these colored Motzkin paths (see~\cite{Fla}) we recover the
expression for $A^{(1)}(x)$:
$$A^{(1)}(x)=\frac{1}{1-x-\dfrac{x^2}{1-3x}}=\frac{1-3x}{1-4x+2x^2}.$$

\ms

The above idea can be
generalized to enumerate $\A^{(k)}_n$ for any~$k$. By
Proposition~\ref{prop:height} we know that if $\pi\in\A^{(k)}_n$, then
the Motzkin path $\theta(\pi)$ has height at most $k$. A variation
of the map $\theta$ will produce a bijection between
$\A^{(k)}_n$ and certain colored Motzkin paths.

\begin{prop}\label{prop:bijpsik}
There is a bijection $\psi_k$ between $\A^{(k)}_n$ and colored
Motzkin paths of length $n$ and height at most $k$, where for each
$h$, \bit
\item each $L$ step at height $h$ receives one of $2h+1$ possible colors,
\item each $U$ step at height $h$ receives one of $h$ possible colors, and
\item each $D$ step at height $h-1$ receives one of $h$ possible colors.
\eit
\end{prop}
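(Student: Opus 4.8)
The plan is to imitate the proof for $\psi_1$, now allowing the cycle diagram to have several strands crossing a given diagonal level and using the colors to record which strand each step interacts with. First I would make explicit the local dictionary between the five types of diagonal squares and the cycle data at $i$: an \open\ occurs exactly when $\pi^{-1}(i)>i$ and $\pi(i)>i$, a \close\ when $\pi^{-1}(i)<i$ and $\pi(i)<i$, an \ubounce\ when $\pi^{-1}(i)<i<\pi(i)$, a \lbounce\ when $\pi^{-1}(i)>i>\pi(i)$, and a \fixed\ when $\pi(i)=i$. As noted in the proof of Proposition~\ref{prop:height}, the \open\ squares are the $U$ steps, the \close\ squares are the $D$ steps, and the remaining three types are the $L$ steps; moreover the height after processing $(i,i)$ equals $h_i=|\{j\le i:\pi(j)>i\}|$, which I will interpret as the number of ``open strands'', that is, excedance tops that have not yet landed (equivalently, by~(\ref{eq:sym}), deficiency bottoms not yet reached).

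Next I would define the coloring $\psi_k$ as a refinement of $\theta$ that records, at each step, how that step connects to the currently open strands. Scanning $i=1,\dots,n$ from bottom-left to top-right and keeping the open strands in a linear order chosen so that the reconstruction below is deterministic, I assign colors as follows: on a $U$ step reaching height $h$ there are $h-1$ open strands before it, and the color in $\{1,\dots,h\}$ records the position at which the new strand created by the \open\ is inserted; on a $D$ step with right endpoint at height $h-1$ there are $h$ open strands, and the color in $\{1,\dots,h\}$ records which one is closed by the \close; on an $L$ step at height $h$ the color first records which of the three subtypes \fixed, \ubounce, \lbounce\ occurs, and then, in the two bounce cases, a value in $\{1,\dots,h\}$ recording which of the $h$ open strands the bounce attaches to. This yields exactly $h$, $h$, and $1+h+h=2h+1$ colors, matching the statement, and the height bound from Proposition~\ref{prop:height} guarantees that the image consists of paths of height at most $k$ precisely when we start from $\A^{(k)}_n$.

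The inverse map reconstructs the cycle diagram from a colored path by the same left-to-right scan: maintaining the ordered list of open strands, each $U$ step inserts a strand, each $D$ step removes the strand named by its color, and each $L$ step either plants a fixed point or makes the named strand bounce, in each case drawing the corresponding vertical and horizontal segments of the diagram. I would prove that $\psi_k$ and this procedure are mutually inverse by induction on $n$, peeling off the first step exactly as in the recursive argument used for $\baw$ and $\psi_1$, and checking that every assignment of legal colors produces a genuine permutation, so that all strands eventually close and the diagram is that of a well-defined $\pi\in\S_n$.

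The main obstacle is choosing the linear order on the open strands so that the ranks recorded by the colors determine, and are determined by, the actual interleaving of the strands in the cycle diagram; this is the delicate bookkeeping that turns the height function into a genuine bijection rather than a mere equinumerosity, and it is exactly here that a naive order (such as by creation column) fails to give the required freedom of choice. Concretely, one must verify that reading off the insertion position of each new excedance strand and the index of each landing or bounce is enough to recover every value $\pi(i)$, and conversely that these data range freely over the stated color sets. This step is a cycle-diagram incarnation of the Foata--Zeilberger/Biane correspondence between permutations and labeled Motzkin paths, and once the order convention is fixed the remaining verification is a routine, if careful, induction.
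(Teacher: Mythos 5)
Your skeleton is the right one (use $\theta$, invoke Proposition~\ref{prop:height} for the height bound, then add colors to account for the failure of injectivity of $\pi\mapsto D(\pi)$), your local dictionary of the five square types is correct, and your color counts match the statement. But the crux of the proposition is precisely the point at which your argument stops. In the cycle diagram there is no intrinsic choice at an \open: all the freedom lives at the closings. A \ubounce\ chooses one of the $h$ open vertical rays, a \lbounce\ one of the $h$ open horizontal rays, and a \close\ chooses an \emph{ordered pair} --- one vertical and one horizontal ray, possibly belonging to two different incomplete cycles, which are then merged. Your single ordered list with ``each $D$ step removes the strand named by its color'' therefore cannot be literally correct: a \close\ carries $h^2$ data, a single index in $\{1,\dots,h\}$ carries only $h$, and the remaining factor of $h$ must be supplied by the $U$-color, whose semantics (``insertion position'') you never pin down. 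You candidly flag this as ``the main obstacle,'' but flagging it does not discharge it: without a specified order convention, a proof that the forward colors are well defined from $\pi$, and a proof that they range freely over the stated sets, the map $\psi_k$ is not defined, so there is no bijection yet. (Your plan can in fact be salvaged, e.g.\ by ordering open vertical rays by the row at which they will eventually close --- so the vertical ray closed by a \ubounce\ or \close\ is always the first one and the freedom migrates to the insertion position of each newly created vertical ray --- while ordering horizontal rays by creation so that the $D$- and \lbounce-colors name which horizontal ray to close; but carrying out this verification, including the use of future information in the forward direction, is exactly the deferred content.)

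The paper sidesteps your obstacle entirely. It first counts, via the ray/gadget reconstruction, the permutations with a given diagonal sequence: each bounce contributes a factor $h$ and each \close\ a factor $(h_i+1)^2=h^2$, with \emph{all} choices made at the closings and none at the openings. Then, rather than seeking intrinsic data at the $U$ steps, it uses the canonical parenthesis matching of $U$ and $D$ steps in a Motzkin path and stores the pair of ray choices made at each \close\ retroactively, half on the $D$ step and half on its matching $U$ step; the inverse map reads a $D$ together with its matching $U$ to decide which vertical and which horizontal ray to close. No linear order on strands, no insertion positions, and no lookahead are needed. So your instinct that this is a cycle-diagram incarnation of the Foata--Zeilberger/Biane correspondence is sound, but the bookkeeping you postpone is the actual substance of the proposition, and the matched-pair coloring trick is the simple resolution your draft is missing.
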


\begin{proof}
Given a permutation $\pi$, the underlying uncolored Motzkin path that
$\psi_k$ maps it to is just $\theta(\pi)$. It remains to show how the steps are colored.
For each entry in the diagonal sequence of $\pi$,
define its height to be the height of the corresponding step in
$\theta(\pi)$. Let $h_i$ be the height of the $i$-th entry. Let us
first consider entries that are not opening or closing brackets,
that is, those that correspond to $L$ steps. It is obvious that such
entries at height $0$ can only be fixed points. Such entries at
height one or more can be either fixed points, upper bounces, or
lower bounces. However, the map $\pi\mapsto D(\pi)$ is not injective
outside of $\A^{(1)}_n$.

For any given diagonal sequence $\Delta=D(\pi)$,
let us analyze the set $\{\pi'\in\S_n\,|\,D(\pi')=\Delta\}$.
Any permutation $\pi'$ with $D(\pi')=\Delta$ can be obtained in the following way. Place the entries of $\Delta$ on
the diagonal of an $n\times n$ array. Think of each symbol
\open,\close,\ubounce,\lbounce\ as a gadget with a vertical and a
horizontal ray that can be extended until they intersect another
ray. Read these symbols from bottom-left to top-right and proceed as
follows. \bit
\item Every time a \fixed\ is read, just place a dot (a fixed point) there.
\item Every time a \open\ is read, it creates a new open vertical ray in its column and a new open
horizontal ray in its row.
\item Every time a \ubounce\ is read, take any of the open vertical rays coming from the symbols read so far,
extend it upward until it intersects the leftward extension of the horizontal ray of the
\ubounce, and place a dot in the intersection. The intersected vertical
ray becomes closed after this, but the \ubounce\ creates a new open vertical ray in its column (see Figure~\ref{fig:reconstruct}).
\item Every time a \lbounce\ is read, proceed similarly with any of the horizontal rays that are open at that time.
\item Every time a \close\ is read, take any of the open vertical rays and any of the open horizontal rays,
extend them until they intersect the two extended
rays of the \close, and place a dot in each of the two
intersections. \eit The placed dots determine a permutation $\pi'$
with diagonal sequence $\Delta$.

\begin{figure}[hbt]
\epsfig{file=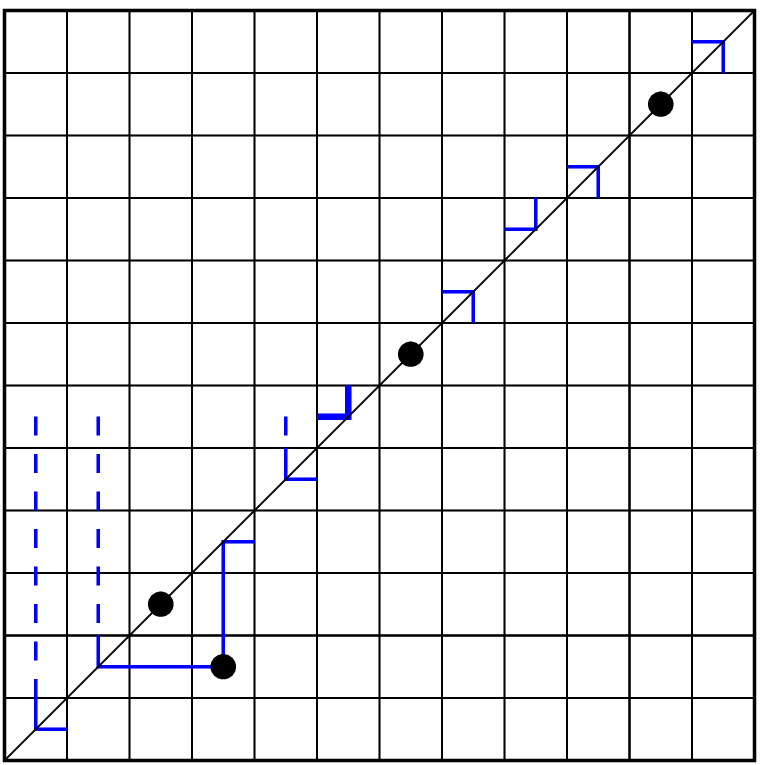,height=5cm}
\epsfig{file=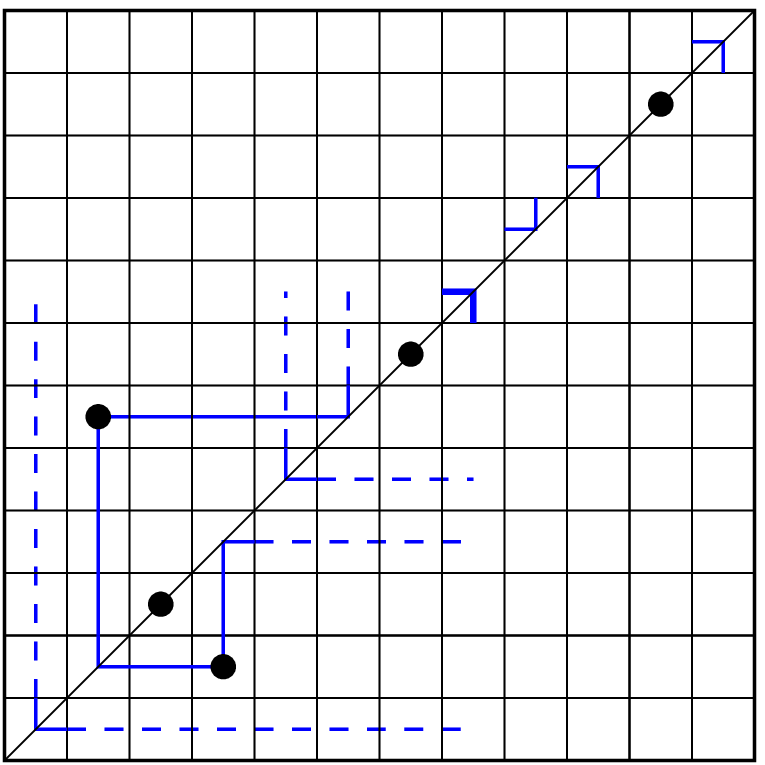,height=5cm}
\epsfig{file=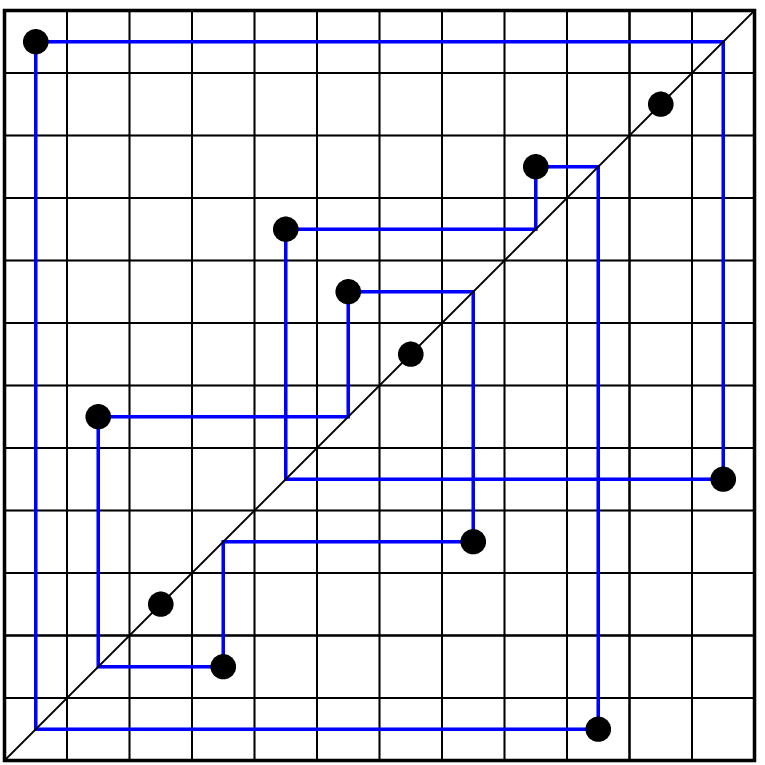,height=5cm}
\caption{\label{fig:reconstruct} Constructing a permutation with a
given diagonal sequence: the different possibilities for what ray to
extend when reading an upper bounce (left) and a closing bracket
(center).}
\end{figure}

To see how many permutations $\pi'$ satisfy $D(\pi')=\Delta$, note
that when a \ubounce\ or \lbounce\ is seen in position $(i,i)$, the
number of vertical (equivalently, horizontal) rays that are open at
that time equals $h_i$. Thus, the number of possibilities for which
ray to close is the height of the corresponding $L$ step in the
path. Similarly, when a \close is seen in position $(i,i)$, the
number of vertical (equivalently, horizontal) rays that are open at
that time is equal to $h_i+1$. So, the number of possibilities for
which rays to close is $(h_i+1)^2$, where $h_i$ is the height of the
corresponding $D$ step in the path.

This argument determines how many colors we need for each step of the Motzkin path
in order to obtain a bijection. For each $h\ge0$, level
steps at height $h$ of $\theta(\pi)$ can receive one of $2h+1$
colors, corresponding to the $h$ possibilities of which
rays to close when the symbol is a \ubounce, plus the $h$
possibilities when the symbol is a \lbounce, plus the case where the
symbol is a \fixed. For each $h\ge1$, down steps at height $h-1$ can receive
one of $h^2$ colors, corresponding to the
possibilities of which rays to close when a \close\ is read.
Instead of using $h^2$ colors for down steps at height $h-1$, another equivalent (and more symmetric) way to
obtain the $h^2$ factor is by coloring up steps at height $h$ with one of $h$ possible colors and down steps at
height $h-1$ with one of $h$ possible colors. This is the coloring that we use to define the bijection $\psi_k$.

The description of the inverse map $\psi_k^{-1}$ is clear. Given a colored Motzkin path, read its steps from left to right while
building the array of a permutation from the lower-left to the upper-right corner. For each step of the path, place a symbol in the diagonal of the array accordingly,
that is,
\bit \item if the step is a $U$, put a \open, creating an open vertical ray and an open horizontal ray;
\item if the step is a $D$, put a \close, and use the colors of that $D$ and its matching $U$ to determine which horizontal and which vertical ray to close, placing dots where the closed rays intersect the extended rays of the \close;
\item if the step is a $L$, use its color to determine whether to put a \fixed, a \ubounce, or a \lbounce, and in the last two cases, also to determine which rays to close and where to place a dot.
\eit
\end{proof}

This bijection reduces the enumeration $\A^{(k)}_n$ to finding a
generating function for colored Motzkin paths with bounded height.

\begin{theorem}\label{thm:gfak} Let $k\ge0$. The generating function for the numbers
$a^{(k)}_n$ is
$$A^{(k)}(x)=\frac{1}{1-x-\dfrac{x^2}{1-3x-\dfrac{4x^2}{1-5x-\dfrac{9x^2}{\bt{l}
$\ddots\hspace{27mm}\ddots$ \\
$\qquad1-(2k-1)x-\dfrac{k^2x^2}{1-(2k+1)x}$ \et}}}}.$$
\end{theorem}

\begin{proof}
By Proposition~\ref{prop:bijpsik}, $A^{(k)}(x)$ is the generating function for weighed Motzkin paths where $L$ steps at height $h$ have weight $2h+1$, and $U$ steps at height $h$ and $D$ steps at height $h-1$ have weight $h$.
Obtaining the generating function is now a straightforward application of the tools from~\cite{Fla}.
\end{proof}

For small values of $k$, the expressions of $A^{(k)}(x)$ as a
quotient of polynomials are \bea \nn
A^{(2)}(x)&=&\frac{1-8x+11x^2}{1-9x+18x^2-6x^3}, \\ \nn
A^{(3)}(x)&=&\frac{1-15x+58x^2-50x^3}{1-16x+72x^2-96x^3+24x^4}, \\
\nn
A^{(4)}(x)&=&\frac{1-24x+177x^2-444x^3+274x^4}{1-25x+200x^2-600x^3+600x^4-120x^5}.
\eea

These results agree with~\cite{Atk}, where Atkinson determines the coefficients of these polynomials from the recurrence relation satisfied by the $a^{(k)}_n$.

It is worth mentioning that $\psi_k$ can naturally be extended to a bijection between $\S_n$ and colored Motzkin paths of length $n$ with no height restriction, where the possible colors of the steps
at each height are given by the same rules as in Proposition~\ref{prop:bijpsik}.

\section{Statistics on almost-increasing permutations}\label{sec:stat}

Having a bijection $\psi_k$ between almost-increasing permutations and
colored Motzkin paths enables us to study the distribution of some statistics on
almost-increasing permutations. The following is a refinement of Theorem~\ref{thm:gfak} by considering
the number of cycles, the number of fixed points, and the number of excedances.


\begin{theorem}\label{thm:gfak_ref}
Let $k\ge0$, and let $$F^{(k)}(t,u,v,x)=\sum_{c,i,j,n\ge0}
|\{\pi\in\A^{(k)}_n :\, \cyc(\pi)=c,\, \fp(\pi)=i,\, \exc(\pi)=j\}|\ t^c u^i v^j x^n.$$ Then $F^{(k)}(t,u,v,x)=$
{\small
$$\frac{1}{1-tux-\dfrac{tvx^2}{1-((1+v)+tu)x-\dfrac{2(1+t)vx^2}{1-(2(1+v)+tu)x-\dfrac{3(2+t)vx^2}{\bt{l}
$\ddots\hspace{27mm}\ddots$ \\
$1-((k-1)(1+v)+tu)x-\dfrac{k(k-1+t)vx^2}{1-(k(1+v)+tu)x}$ \et}}}}.$$}
\end{theorem}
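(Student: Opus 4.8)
The plan is to read the weights $t^{\cyc(\pi)}u^{\fp(\pi)}v^{\exc(\pi)}$ off the bijection $\psi_k$ of Proposition~\ref{prop:bijpsik} as a product of local weights on the steps of the colored Motzkin path, and then to quote the continued-fraction expansion of~\cite{Fla} for weighted Motzkin paths of bounded height, exactly as in the proof of Theorem~\ref{thm:gfak}. First I would record how the three statistics appear in the cycle diagram. A diagonal symbol at $(i,i)$ encodes the position of $i$ relative to $\pi(i)$ and $\pi^{-1}(i)$, so one checks directly from the definitions of the symbols that $\fixed$ occurs exactly at fixed points, that $\open$ and $\ubounce$ occur exactly at the excedances ($\pi(i)>i$), and that $\close$ and $\lbounce$ occur at the deficiencies. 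Hence $\fp(\pi)$ is the number of $L$ steps of fixed-point color, and $\exc(\pi)$ is the number of $U$ steps together with the number of $L$ steps of upper-bounce color. This already fixes the factor $u$ on each fixed-point $L$ step and the factor $v$ on each $U$ step and each upper-bounce $L$ step.

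The crux is to locate $\cyc(\pi)$. For this I would run the reconstruction of Proposition~\ref{prop:bijpsik} and follow the partial cycles (arcs) formed by the open rays. The invariant I would prove, by induction on the symbols read from bottom-left to top-right, is that at every stage each arc has exactly one open vertical ray and one open horizontal ray as its two loose ends: an $\open$ starts a new arc with one end of each kind, a $\ubounce$ (resp.\ $\lbounce$) slides the vertical (resp.\ horizontal) end of an existing arc, and a $\close$ splices one vertical end to one horizontal end. In particular, when a $\close$ is read at a $D$ step of height $h-1$ there are exactly $h$ arcs, hence $h$ open vertical and $h$ open horizontal ends, which is the source of the $h^2$ ray-closing choices already present in the uncolored map. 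A cycle is completed precisely when the chosen vertical end and horizontal end lie on the same arc, which happens for exactly $h$ of the $h^2$ choices; the other $h(h-1)$ choices merge two distinct arcs and close no cycle. Since in addition each fixed point is a cycle of its own, $\cyc(\pi)$ equals the number of fixed-point $L$ steps plus the number of cycle-closing brackets.

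With the statistics localized, I would assign the weights. A level step at height $h$ has weight $tu$ when it is a fixed point (one new cycle, one fixed point, no excedance), weight $v$ on each of its $h$ upper-bounce colors, and weight $1$ on each of its $h$ lower-bounce colors, for a total $b_h=tu+h(1+v)$. For a peak I would put the excedance factor $v$ on the $U$ step at height $h$ and the cycle bookkeeping on the matching $D$ step at height $h-1$, whose aggregate weight over the $h^2$ ray-closings is $h\cdot t+h(h-1)\cdot1=h(h-1+t)$; their product is $\lambda_h=h(h-1+t)v$. Because $\psi_k$ maps $\A^{(k)}_n$ onto the colored Motzkin paths of height at most $k$ (Propositions~\ref{prop:height} and~\ref{prop:bijpsik}), feeding the weights $b_0,\dots,b_k$ and $\lambda_1,\dots,\lambda_k$ into the standard continued fraction of~\cite{Fla} produces precisely the stated expression for $F^{(k)}(t,u,v,x)$.

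The step I expect to be the main obstacle is making the cycle count rigorous: establishing the one-vertical-end, one-horizontal-end invariant and the criterion that a $\close$ closes a cycle if and only if it joins the two ends of a single arc, and confirming that this is compatible with the symmetric coloring defining $\psi_k$. The point that keeps this manageable is that only the total weight attached to each pair (step type, height) enters the generating function, so it is enough to sum $t^{[\mbox{closes a cycle}]}$ over the colors at a given closing bracket---which yields the clean value $h(h-1+t)$---rather than to track the weight of each individual color.
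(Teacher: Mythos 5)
Your proposal is correct and follows essentially the same argument as the paper: build the permutation array step by step via $\psi_k^{-1}$, weight level steps at height $h$ by $tu+h(1+v)$, observe that of the $h^2$ ray-closing choices at a closing bracket exactly $h$ complete a cycle (giving $h(h-1+t)v$ for each matched $U$--$D$ pair), and then apply the continued-fraction machinery of~\cite{Fla}. The only differences are cosmetic: you attach the excedance factor $v$ to the $U$ step (where the \open\ creates the excedance) rather than to the dot placed at the \close\ as the paper does, and you spell out the one-vertical-end/one-horizontal-end invariant for incomplete cycles that the paper merely asserts.
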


\begin{proof}
To calculate the contribution of each step of the colored Motzkin path to the number of cycles, fixed points, and excedances
of the permutation, we think of the array of the permutation as being built as the steps of the path are read from left to right, using the description of $\psi_k^{-1}$.
Every time a cycle, fixed point, or excedance is created, this will be reflected it in the generating function.
We begin by justifying that the contribution of a level step at height $h$ in the generating
function is $h(1+v)+tu$. Recall that level steps of the
Motzkin path correspond to \fixed, \ubounce, and \lbounce\ in the diagonal sequence of the
permutation. A level step at height $h$ can be receive $2h+1$ colors. One of these colors
indicates a fixed point in the permutation, which contributes $tu$ to the generating function.
Of the remaining $2h$ colors, half of them come from a \ubounce, which creates an excedance in
the permutation, while the other half come from a \lbounce, which produces a deficiency. This
explains the contribution $h(1+v)$.

\begin{figure}[hbt]
\epsfig{file=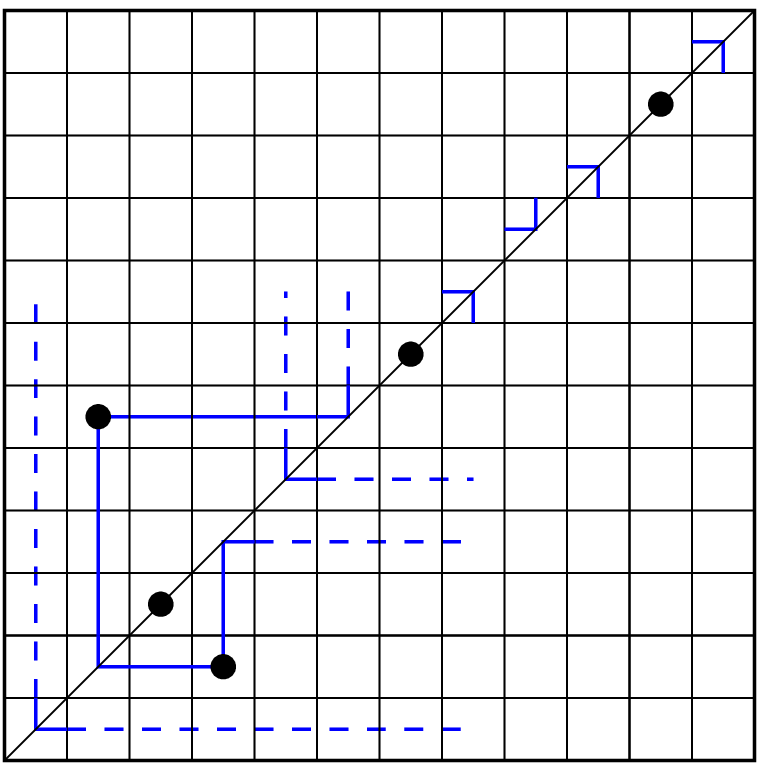,height=5cm}
\epsfig{file=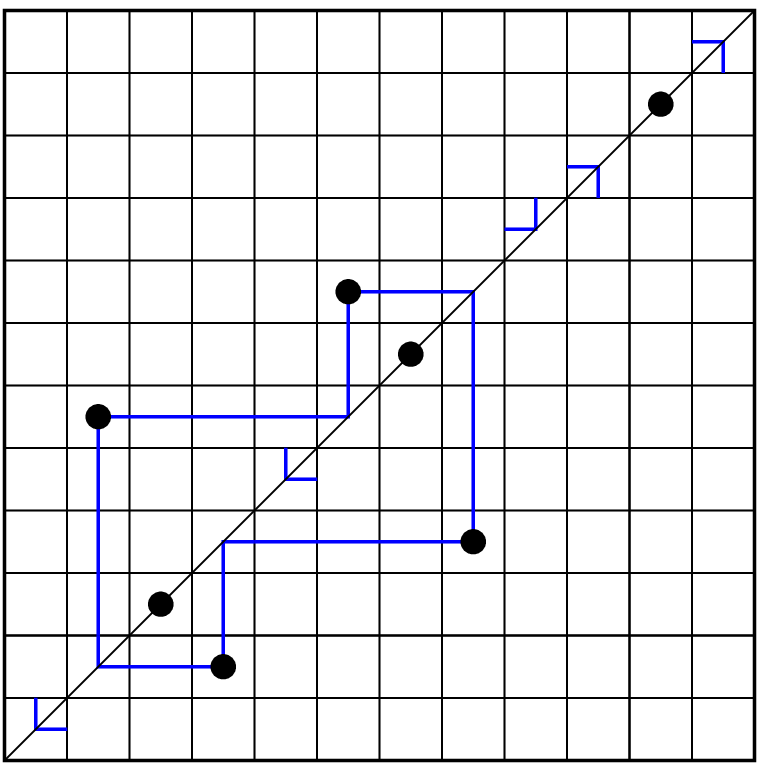,height=5cm}
\epsfig{file=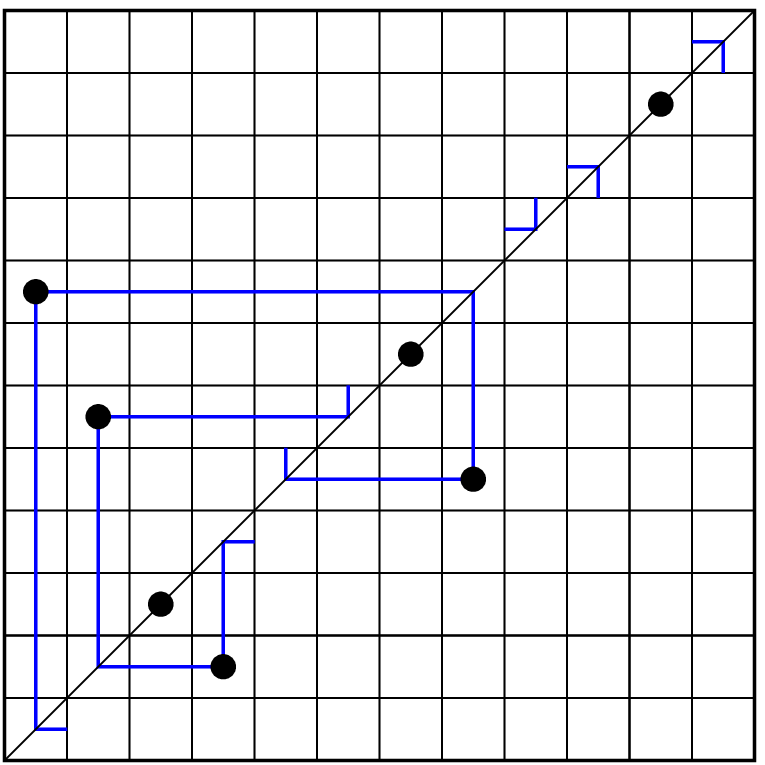,height=5cm}
\caption{\label{fig:difreconstruct} Two different ways of closing
the rays with a closing bracket: completing a cycle (center) or leaving it incomplete (right).}
\end{figure}

Next we show that the joint contribution of a $U$ at height $h$ and the
matching $D$ at height $h-1$ is $h(h-1+t)vx^2=(h^2-h+ht)v$. Recall
from the construction of $\psi_k$ that the $h^2$ possible ways of
coloring this pair of steps correspond to the choices among the $h$ vertical and the
$h$ horizontal rays that can be closed when the symbol \close
appears in the diagonal sequence. When building the array of the permutation from the colored Motzkin path,
these open rays can be thought of
as ``incomplete'' cycles. For each open vertical ray, there is a
unique open horizontal ray that belongs to the same cycle (see
Figure~\ref{fig:difreconstruct}). Closing these two rays
simultaneously completes the cycle. Closing two rays that belong to
different incomplete cycles merges them into one cycle;
this decreases by one the number of incomplete cycles, but does not
complete any cycle. Therefore, of the $h^2$ possible ways of choosing
the pair of rays to close, exactly $h$ of them complete a cycle. This justifies the
factor $h^2-h+ht$. The factor $v$ is explained by the fact that the dot placed at the intersection of the
closed vertical ray with the horizonal ray of the \close produces an excedance.
\end{proof}

Another statistic whose distribution we can obtain is the number of inversions. While we have not been able to use our method to
keep track of inversions and cycles simultaneously, the following result
gives the joint distribution of the number of inversions, the number of fixed points, and the number of excedances. We use the notation $[k]_q=1+q+q^2+\dots+q^{k-1}$.

\begin{theorem}\label{thm:gfak_inv}
Let $k\ge0$, and let $$G^{(k)}(q,u,v,x)=\sum_{r,i,j,n\ge0}
|\{\pi\in\A^{(k)}_n :\, \inv(\pi)=r,\, \fp(\pi)=i,\, \exc(\pi)=j\}|\ q^r u^i v^j x^n.$$ Then $G^{(k)}(q,u,v,x)=$
{\small $$\frac{1}{1-ux-\dfrac{vqx^2}{1((1+v)q+uq^2)x-\dfrac{vq^3(1+q)^2x^2}{\bt{l}$\ddots\hspace{27mm}\ddots$ \\
$1-((1+v)q^{k-1}[k-1]_q+uq^{2k-2})x-\dfrac{vq^{2k-1}[k]_q^2x^2}{1-((1+v)q^k[k]_q+uq^{2k})x}$ \et}}}.$$}
\end{theorem}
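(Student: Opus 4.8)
The plan is to follow the route of Theorem~\ref{thm:gfak_ref}: push $\pi$ through the bijection $\psi_k$ and attach to each colored step of the resulting Motzkin path a weight recording its contribution to the three statistics, then read off the continued fraction from these weights using the tools of~\cite{Fla}. The markers for fixed points and excedances are attached essentially as in the proof of Theorem~\ref{thm:gfak_ref}, now without the cycle marker $t$: a level step at height $h$ carries the factor $u$ on its single fixed-point color, the factor $v$ on each of its $h$ upper-bounce colors and the factor $1$ on each of its $h$ lower-bounce colors, while the excedance created by the vertical dot of a \close puts the factor $v$ on the corresponding $U$/$D$ pair. Thus the only new task is to determine the power of $q$ that each step must carry.

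To do this I will attribute every inversion of $\pi$ to exactly one step. Building the array from the path as in the description of $\psi_k^{-1}$, note that each dot placed when the symbol at $(i,i)$ is read has $\max(\mathrm{column},\mathrm{row})=i$; call this number the \emph{time} of the dot. I attribute each inversion to the dot of smaller time, the only ties being the two dots produced by a single \close (a vertical dot and a horizontal dot whose positions always form an inversion), which I break by convention. A short case analysis shows that the dots of a given time $i$ come exactly from the single symbol at $(i,i)$, so this rule is well defined and partitions $\inv(\pi)$.

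With this rule the contribution of a step becomes local. Writing $h_i=|\{j\le i:\pi(j)>i\}|$ for its height and using~(\ref{eq:sym}), the inversions charged to a freshly placed dot are those in which it is the earlier dot, and these are counted by looking at the later dots lying to its south-east and to its north-west. For a fixed point at height $h$ one gets $h+h=2h$; for an upper bounce (resp.\ lower bounce) that closes the $s$-th from the left (resp.\ bottom) of its $h$ open vertical (resp.\ horizontal) rays one gets $h+(s-1)$; and for a \close at a $D$ step of height $h-1$, whose two dots use the $s$-th open vertical and the $t$-th open horizontal ray, the two dots together with their mutual inversion contribute $(2h-1)+(s-1)+(t-1)$. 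Summing $q$ raised to these powers over the colors $s,t\in\{1,\dots,h\}$ gives
$$b_h=uq^{2h}+(1+v)q^h[h]_q,\qquad \lambda_h=vq^{2h-1}[h]_q^2,$$
where $b_h$ is the weight of a level step at height $h$ and $\lambda_h$ is the combined weight of a $U$ step at height $h$ with its matching $D$ step at height $h-1$. Feeding $b_h$ and $\lambda_h$ into the continued fraction of~\cite{Fla} then yields exactly $G^{(k)}(q,u,v,x)$.

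The main obstacle is the local inversion count, and especially the closing bracket. Two points need care. First, a \close sits at a $D$ step, so its relevant height is $h-1$ rather than $h$; this is precisely what produces the prefactor $q^{2h-1}$ of $\lambda_h$ instead of the $q^h$ of a bounce, and keeping track of whether the height before or after the step is used is where errors are easy to make. Second, the two dots of a \close have equal time, so their mutual inversion must be counted once and only once. I expect the cleanest bookkeeping is to express, for each newly placed dot, the number of open vertical and horizontal rays lying on each side of the chosen ray, converting the ``later south-east'' dots into $h_i$ via~(\ref{eq:sym}); the ranks $s$ and $t$ then supply the two $[h]_q$ factors and account for all nestings among the arcs of the cycle diagram.
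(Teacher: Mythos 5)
Your proposal is correct and follows essentially the same route as the paper: both arguments push $\pi$ through $\psi_k$, assign each colored step a local weight in $q,u,v$, and read off the continued fraction via the machinery of~\cite{Fla}, arriving at the same level weights $uq^{2h}+(1+v)q^{h}[h]_q$ and matched-pair weights $vq^{2h-1}[h]_q^2$. The only difference is bookkeeping of inversions: you charge each inversion to the dot of smaller time (so the opening bracket carries no weight, the full $q^{(2h-1)+(s-1)+(t-1)}$ sits on the $D$ step, and the $q^h$ in a bounce's weight counts inversions of the newly placed dot with the $h$ open horizontal rays), whereas the paper charges each inversion to the step that forces it (so the factor $q^{2h-1}$ sits on the $U$ step, and a bounce's $q^h$ counts inversions of the future dot on its new ray) --- the products over matched $U$/$D$ pairs, and hence the continued fraction, coincide.
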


\begin{proof}
The idea of this proof is similar to that of Theorem~\ref{thm:gfak_ref}. We think of the
array of the permutation $\pi=\psi_k^{-1}(M)$ as being built as we go through the steps of the colored Motzkin path $M$. Each step determines what
symbol to place in the diagonal of the array, which rays to open and close, and where to place dots. We will consider that
a step of the path ``creates" an inversion in the permutation when the changes in the array produced by that step force the inversion to occur.

First we show that the contribution of a level step at height $h$ to the generating
function is $(1+v)q^h[h]_q+uq^{2h}$. If this step corresponds to a fixed point $i$ in the permutation, then $i$
will form an inversion with the $h$ dots that will be placed (further along the construction of~$\pi$) in the currently open vertical rays, above and to the left of $(i,i)$ in the array,
and also with the $h$ dots that will be placed in the currently open horizontal rays, below and to the right of $(i,i)$. This situation is depicted in Figure~\ref{fig:inv}(a).
The contribution in this case is $uq^{2h}$.

\begin{figure}[hbt]
\begin{tabular}{ccc}
\epsfig{file=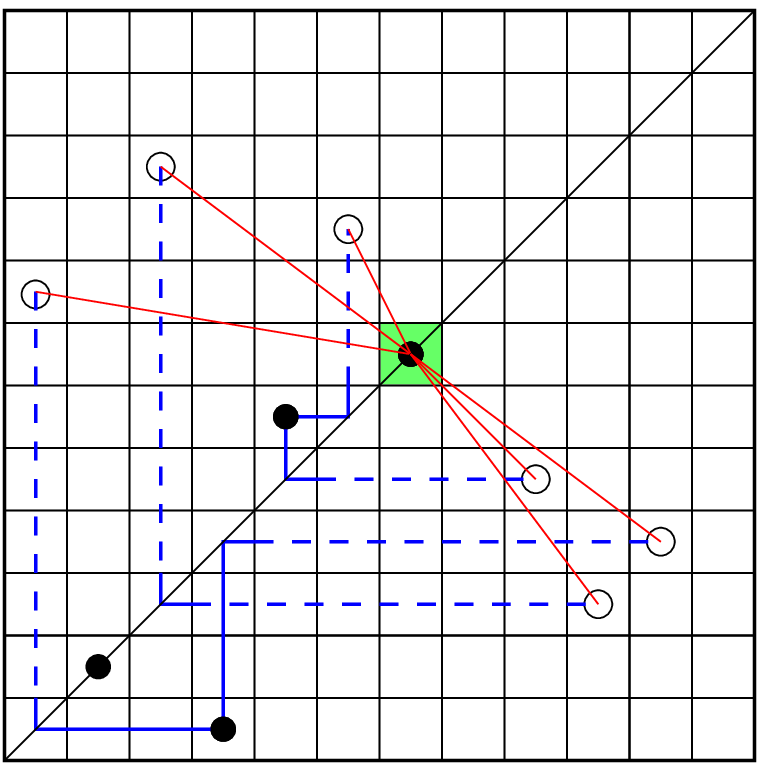,height=55mm} & \quad & \epsfig{file=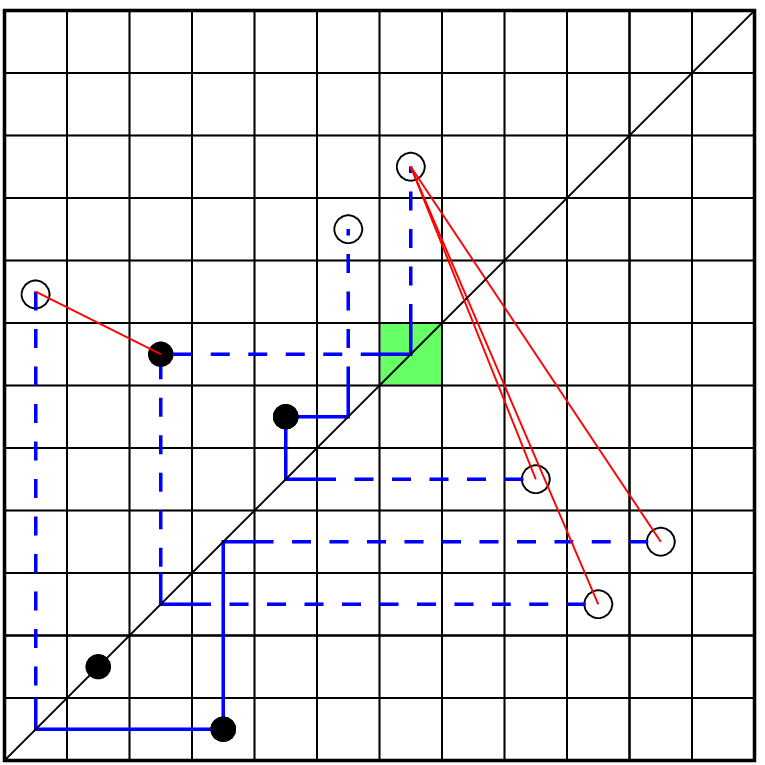,height=55mm} \\
(a) && (b) \vspace{3mm} \\
\epsfig{file=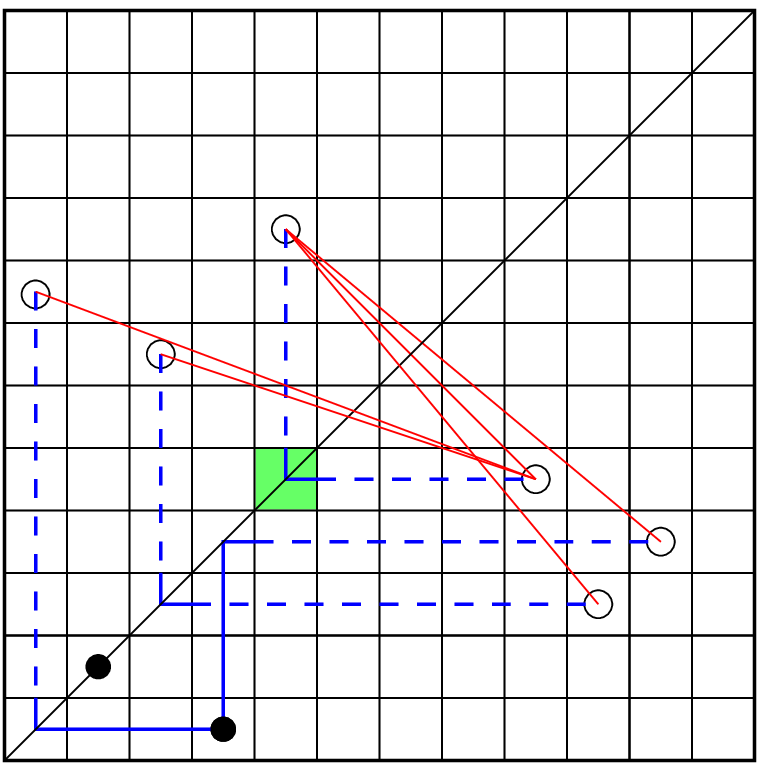,height=55mm} && \epsfig{file=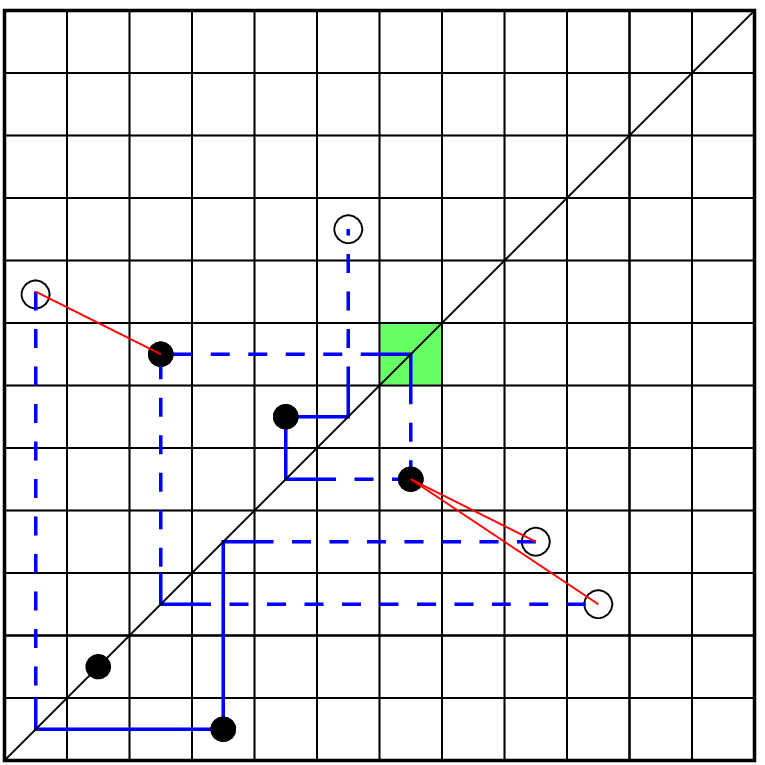,height=55mm}\\
(c) && (d)
\end{tabular}
\caption{\label{fig:inv} Inversions created by a fixed point (a), an upper bounce (b), an opening bracket (c), and a closing bracket (d), all corresponding to steps at height $h=3$ in the Motzkin path. The white dots represent entries that will be placed further along the construction of the permutation.}
\end{figure}

If the level step corresponds to a \ubounce\ in the array, then we have $h$ choices for which vertical ray to close and place a dot on.
If the $i$-th from the left of these vertical rays is chosen, the new placed dot will create inversions with the dots that will later be placed on the $i-1$ open vertical rays to the left of it, but not with the dots placed on the $h-i$ open vertical rays to its right. Additionally, the new open vertical ray produced by the
\ubounce\ will create (once a dot is placed on it) inversions with the future dots
on each one of the $h$ currently open horizonal rays. See Figure~\ref{fig:inv}(b) for an example with $i=2$. Since $1\le i\le h$, the contribution of a \ubounce\ to the generating function is $\sum_{i=1}^h vq^{(i-1)+h}=vq^h(1+q+\dots+q^{h-1})$, where the $v$ indicates that the new placed dot is an excedance.
The case of a \lbounce\ in the array is analogous, only that the new dot in this case does not produce an excedance, so the contribution is just $q^h(1+q+\dots+q^{h-1})$.

Next we show that the joint contribution of a $U$ at height $h$ and the
matching $D$ at height $h-1$ is $vq^{2h-1}[h]_q^2$. A $U$ in the path corresponds to a \open\ in the diagonal of the array of~$\pi$. The new vertical ray
emanating from this symbol forces
an inversion with each of the $h-1$ currently open horizontal rays (once dots are placed on them), and the new horizontal ray forces
an inversion with each of the $h-1$ currently open vertical rays. Additionally, the two future dots on the two new rays will also form an inversion pair. Figure~\ref{fig:inv}(c) represents this situation. The contribution of a \open\ is therefore $q^{(h-1)+(h-1)+1}=q^{2h-1}$.
Finally, a $D$ in the path corresponds to a \close\
in the array of $\pi$, which can close any one of the $h$ currently open vertical rays and any one of the $h$ currently open horizontal rays. If the $i$-th vertical ray
from the left is closed, the placed dot will create inversions with the future dots on the $i-1$ open vertical rays to its left. Similarly, if the $j$-th horizontal ray
from the bottom is closed, the placed dot will create inversions with the future dots on the $j-1$ open horizontal rays below it. Figure~\ref{fig:inv}(d) shows an
example with $i=2$ and $j=3$. Since $1\le i,j\le h$, the contribution of a \close\ to the generating function is $\sum_{i,j=1}^h vq^{(i-1)+(j-1)}=v(1+q+\dots+q^{h-1})^2$, where again the $v$ indicates that one of the two placed dots is an excedance.
\end{proof}

Finally, it is not difficult to restrict our results to involutions. Let $\I^{(k)}_n=\{\pi\in\A^{(k)}_n : \pi=\pi^{-1}\}$ be the set of $k$-almost-increasing involutions of length $n$.

\begin{corollary}\label{thm:gfak_invltn}
Let $k\ge0$,and let $$H^{(k)}(q,u,v,x)=\sum_{r,i,j,n\ge0}
|\{\pi\in\I^{(k)}_n :\, \inv(\pi)=r,\, \fp(\pi)=i,\, \exc(\pi)=j\}|\, q^r u^i v^j x^n.$$ Then $H^{(k)}(q,u,v,x)=$
{\small $$\frac{1}{1-ux-\dfrac{vqx^2}{1-uq^2x-\dfrac{vq^3(1+q^2)x^2}{\bt{l}$\ddots\hspace{27mm}\ddots$ \\
$1-uq^{2k-2}x-\dfrac{vq^{2k-1}(1+q^2+q^4+\dots+q^{2k-2})x^2}{1-uq^{2k}x}$ \et}}}.$$}
\end{corollary}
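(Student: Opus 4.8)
The plan is to obtain $H^{(k)}(q,u,v,x)$ as a specialization of the machinery already developed for $G^{(k)}(q,u,v,x)$ in Theorem~\ref{thm:gfak_inv}, by understanding which colored Motzkin paths correspond to involutions under the bijection $\psi_k$. First I would determine the image of $\I^{(k)}_n$ under $\psi_k$. A permutation $\pi$ is an involution precisely when its cycle diagram consists only of fixed points and $2$-cycles, and a $2$-cycle $(a\,b)$ with $a<b$ produces a symmetric pair of dots straddling the diagonal. In the ray-extension construction of $\psi_k^{-1}$, this means that whenever a \close\ is read, the vertical ray it closes and the horizontal ray it closes must be the two rays belonging to the \emph{same} incomplete cycle; equivalently, no \ubounce\ or \lbounce\ may ever occur (since a bounce forces a cycle of length at least $3$), and each \close\ is forced to complete the cycle opened by its matching \open. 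I would verify this characterization carefully, as it is the conceptual heart of the corollary.

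Having established that involutions correspond exactly to colored Motzkin paths in which every level step is a fixed point (the \fixed\ color) and every $D$ step closes the rays of its matching $U$, I would revisit the step-by-step inversion bookkeeping carried out in the proof of Theorem~\ref{thm:gfak_inv}, keeping only the contributions that survive under this restriction. For a level step at height $h$: only the fixed-point color remains, so the contribution collapses from $(1+v)q^h[h]_q + uq^{2h}$ to just $uq^{2h}$, giving the factors $u, uq^2, uq^4, \ldots, uq^{2k}$ on the successive diagonal entries. For the matched $U$/$D$ pair at heights $h$ and $h-1$: the \open\ still contributes $q^{2h-1}$ from the forced inversions described in Figure~\ref{fig:inv}(c), but the \close\ is no longer free to choose among $h^2$ ray-pairs. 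I expect this to be the main obstacle and the place requiring the most care.

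The crux is recomputing the $D$-step contribution under the constraint that each \close\ closes the rays of its own $2$-cycle. The matching is now a nested/non-crossing pairing of opens and closes determined by the path itself, so the inversion count must be re-derived from this forced structure rather than summed freely over $i,j$. I would argue that if a \close\ at height $h-1$ closes the rays of the $2$-cycle it completes, the number of still-open vertical rays to the left (and horizontal rays below) that the newly placed dots will invert with is controlled by the nesting, producing the single term $vq^{2h-1}(1+q^2+q^4+\dots+q^{2k-2})$-type factor rather than the product of two independent geometric sums $v[h]_q^2$. Combining the surviving $U$ factor $q^{2h-1}$ with this reduced $D$ contribution should yield the numerators $vqx^2,\ vq^3(1+q^2)x^2,\ \ldots,\ vq^{2k-1}(1+q^2+\dots+q^{2k-2})x^2$ exactly as displayed.

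Finally I would assemble these weights into the branched continued fraction using the same Flajolet-type evaluation (\cite{Fla}) invoked in Theorems~\ref{thm:gfak} and~\ref{thm:gfak_inv}: level steps at height $h$ contribute to the $(h{+}1)$-th diagonal term $1-uq^{2h}x$, and each matched $U/D$ pair contributes the numerator $vq^{2h-1}(1+q^2+\dots+q^{2h-2})x^2$ at the corresponding level. Since the paths here are genuine (uncolored on bounces, fully determined closings) Motzkin paths of bounded height, the continued fraction truncates at level $k$, matching the stated expression for $H^{(k)}(q,u,v,x)$. The only genuinely new work beyond Theorem~\ref{thm:gfak_inv} is the involution characterization and the recomputation of the closing-bracket inversion weight; everything else is inherited, so I would present the proof as a short deduction that specializes the earlier argument rather than redoing the continued-fraction evaluation from scratch.
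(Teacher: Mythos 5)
Your reduction to the machinery of Theorem~\ref{thm:gfak_inv} is the right strategy, and your first observation (no \ubounce\ or \lbounce\ can occur in the diagonal sequence of an involution, since a bounce forces a cycle of length at least $3$) is correct and matches the paper. But your characterization of which closings are allowed contains a genuine error: you claim that each \close\ ``is forced to complete the cycle opened by its matching \open,'' so that involutions correspond to \emph{uncolored} paths with ``fully determined closings.'' This is false. The symmetry of an involution's array forces only that the vertical and horizontal rays closed by a \close\ be mirror images of each other across the diagonal (i.e.\ belong to the same open $2$-cycle); it does \emph{not} force that $2$-cycle to be the one opened by the $D$ step's matching $U$ in the nesting of the path. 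Concretely, $(3,4,1,2)$ and $(4,3,2,1)$ are both involutions with the same underlying path $UUDD$: in the first, the \close\ at position $3$ closes the rays of the \open\ at position $1$ (a crossing closure), in the second those of the \open\ at position $2$ (the nested one). So a $D$ step at height $h-1$ retains $h$ color choices --- the involution condition collapses the $h^2$ choices of Proposition~\ref{prop:bijpsik} to $h$, not to $1$. This is exactly how the paper proceeds: $L$ and $U$ steps lose their colors, but $D$ steps keep $h$ colors, since the choice of horizontal ray determines the choice of vertical ray.

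This error propagates into your weight computation and cannot be repaired within your framework: under fully determined closings, each $D$ step would contribute a single path-dependent monomial, so the numerator polynomial $vq^{2h-1}(1+q^2+\cdots+q^{2h-2})$ could never arise as a step weight in a Flajolet-type continued fraction --- your appeal to ``the nesting'' producing this factor is precisely where the argument breaks. The correct derivation (the paper's second modification to the proof of Theorem~\ref{thm:gfak_inv}) is that when a \close\ closes the $i$-th open vertical ray from the left, symmetry forces it to close the $i$-th open horizontal ray from the bottom, creating $2(i-1)$ inversions; summing over the $h$ choices gives the $D$-weight $v\sum_{i=1}^{h}q^{2(i-1)}=v(1+q^2+\cdots+q^{2h-2})$, which combined with the unchanged $U$-weight $q^{2h-1}$ yields the stated numerators. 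Your version would also simply undercount: already at $n=4$ it misses the term $v^2q^4x^4$ coming from $(3,4,1,2)$, whereas the true coefficient of $x^4$ at height $2$ is $v^2q^4(1+q^2)$, witnessing both involutions. The rest of your outline (killing the $(1+v)q^h[h]_q$ terms so that $L$ steps contribute $uq^{2h}$, and the truncated continued-fraction assembly) is sound and agrees with the paper.
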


\begin{proof}
A permutation $\pi$ is an involution if and only if its array is symmetric with respect to the diagonal from the bottom-left to the top-right corner. This
implies that its diagonal sequence $D(\pi)$ does not contain any \ubounce\ or \lbounce\ symbols, and also that every \close in the array closes a vertical and a horizontal ray that are symmetric with respect to the diagonal.
This allows us to restrict $\psi_k$ to a bijection between $\I^{(k)}_n$
and colored Motzkin paths of length $n$ and height at most $k$, where $L$ and $U$ steps can only receive one color, and $D$ steps at height $h-1$ receive one of
$h$ possible colors, corresponding to the $h$ choices of which rays to close when a \close is placed in the diagonal, since the choice of horizontal ray determines
the choice of vertical ray.

To find the generating function for involutions with respect to the number of inversions, fixed points, and excedances, we argue as in the proof of Theorem~\ref{thm:gfak_inv}, with the following two changes. First, we have to exclude the contributions of colored $L$ steps corresponding to \ubounce\ or \lbounce\
symbols. This kills the $(1+v)q^h[h]_q$ terms, so an $L$ step at height $h$ only contributes $uq^{2h}$. Second, we have to take into account that when a \close\ closes the $i$-th open vertical ray from the left, it must also close the $i$-th open horizontal ray from the bottom, therefore creating $2(i-1)$ inversions. So the contribution of a \close\ is now $\sum_{i=1}^h vq^{2(i-1)}$.
\end{proof}

We conclude by mentioning that if we let $k$ go to infinity, Theorem~\ref{thm:gfak_ref} gives a continued fraction
expression for the generating function of all permutations with respect to the number of cycles, fixed points,
and excedances. By Foata's correspondence~\cite{Foa}, this also provides the enumeration of permutations by the
number of left-to-right minima and descents. Similarly, taking $k\rightarrow\infty$ in Theorem~\ref{thm:gfak_inv} (resp. Corollary~\ref{thm:gfak_invltn}),
we get a continued fraction that enumerates all permutations (resp. all involutions) by the inversion number and the number of fixed points and excedances.

\end{document}